\newcommand{\kJ}{\mathfrak{J}}
\newcommand{\R}{\mathbb{R}}
\newcommand{\N}{\mathbb{N}}
\newcommand{\Z}{\mathbb{Z}}
\newcommand{\F}{\mathcal{F}}
\newcommand{\A}{\mathcal{A}}
\newcommand{\T}{\mathcal{T}}
\newcommand{\B}{\mathcal{B}}
\newcommand{\calH}{\mathcal{H}}
\newcommand{\calP}{\mathcal{P}}
\newcommand{\calC}{\mathcal{C}}
\newcommand{\calN}{\mathcal{N}}
\newcommand{\X}{\mathcal{X}}
\newcommand{\w}{\omega}
\newcommand{\de}{\delta}
\newcommand{\g}{\gamma}
\newcommand{\W}{\Omega}
\newcommand{\htau}{\hat{\tau}}
\newcommand{\wh}[1]{\widehat{#1}}
\newtheorem{theorem}{Theorem}[section]
\newtheorem{lemma}[theorem]{Lemma}
\newtheorem{proposition}[theorem]{Proposition}
\newtheorem*{general-problem}{GP }
\newtheorem*{Problema A}{Problem A}
\newtheorem*{Problema B}{Problem B}
\newtheorem*{Problema C}{Problem C}
\theoremstyle{remark}
\newtheorem{remark}[theorem]{Remark}
\newtheorem{example}[theorem]{Example}
\theoremstyle{definition}
\newtheorem{definition}[theorem]{Definition}
\date{December 2019}
\subjclass[2010]{Primary 47A15, 94A12 ; Secondary 42C15, 42C40, 43A70.}
\keywords{LCA groups, Zak transform, invariant spaces, extra invariance, multiplicatively invariant spaces}
\thanks{The research of the authors is partially supported by grants: UBACyT 20020170100430BA, PICT 2014-1480 (ANPCyT) and CONICET PIP 11220150100355. In particular VP is also supported  by UBACyT 20020170200057BA and PICT-2016- 2616 (Joven).}
\begin{document}
\title[Extra-invariance of group actions. ]{Extra-invariance of group actions}

\author[ C. Cabrelli, C. A. Mosquera and V. Paternostro]{C. Cabrelli, C. A. Mosquera and V. Paternostro}

\address{\textrm{(C. Cabrelli)}
Departamento de Matem\'atica,
Facultad de Ciencias Exac\-tas y Naturales,
Universidad de Buenos Aires, Ciudad Universitaria, Pabell\'on I,
1428 Buenos Aires, Argentina and
IMAS-CONICET, Consejo Nacional de Investigaciones
Cient\'ificas y T\'ecnicas, Argentina}
\email{cabrelli@dm.uba.ar}

\address{\textrm{(C. A. Mosquera)}
Departamento de Matem\'atica,
Facultad de Ciencias Exac\-tas y Naturales,
Universidad de Buenos Aires, Ciudad Universitaria, Pabell\'on I,
1428 Buenos Aires, Argentina and
IMAS-CONICET, Consejo Nacional de Investigaciones
Cient\'ificas y T\'ecnicas, Argentina}
\email{mosquera@dm.uba.ar}

\address{\textrm{(V. Paternostro)}
Departamento de Matem\'atica,
Facultad de Ciencias Exac\-tas y Naturales,
Universidad de Buenos Aires, Ciudad Universitaria, Pabell\'on I,
1428 Buenos Aires, Argentina and
IMAS-CONICET, Consejo Nacional de Investigaciones
Cient\'ificas y T\'ecnicas, Argentina}
\email{vpater@dm.uba.ar}

 \begin{abstract}
Given discrete groups $\Gamma \subset \Delta$ we characterize  $(\Gamma,\sigma)$-invariant spaces that are also invariant under $\Delta$. This will be done in terms of subspaces that we  define using an appropriate  Zak transform and a particular partition of the underlying group. On the way, we obtain a new characterization of principal 
$(\Gamma,\sigma)$-invariant spaces in terms of the Zak transform of its generator. This result is in the spirit of the analogous in  the context of shift-invariant spaces in terms of the Fourier transform, which is very well-known. As a consequence of our results, we give a solution for the problem of finding the  $(\Gamma,\sigma)$-invariant space  nearest - in the sense of least squares - to a given set of data. 
\end{abstract}

\maketitle
\section{Introduction}
Given a measure space $(\X,\mu)$, we study spaces in $L^2(\X)$ that are invariant under the action $\sigma$ of a discrete group $\Gamma$ on $\X$. These spaces are called $(\Gamma,\sigma)$-invariant spaces and their structure was completely characterized in terms of appropriate range functions in \cite{BHP15}. They provide a unified framework for studying spaces invariant, for instance, under dilations, translations and the action of the shear matrix which are operators having a very featured role in Harmonic Analysis, wavelet and shearlet theory and signal processing. 

Since $\X$ might  not have group structure, we face with the lack of the Fourier transform that is the usual tool when studying spaces invariant under translations (see \cite{Bow00, CP10, BR14} and the references therein). To overcome this situation, the approach  is to work with a Zak transform adapted to this setting as was first done in \cite{HSWW10, BHP14} and then in \cite{BHP15}. 

In this paper we make use of the Zak transform to characterized $(\Gamma,\sigma)$-invariant spaces that are additionally invariant by another group $\Delta$ that also acts on $\X$ and whose action extends that of $\Gamma$. Under this situation, we say that the  
$(\Gamma,\sigma)$-invariant space has extra invariance  $\Delta$. The problem of characterizing extra invariance of an invariant space  was first proposed and solved in \cite{ACHKM10} where the authors consider shift-invariant spaces of $L^2(\R)$ (that is, spaces invariant  under integer translations) that are invariant by $\frac1{n}\Z$ for some $n\in\N_{>1}$. Then, the extra invariance was studied for shift-invariant spaces in several variables in \cite{ACP11} and in the context of locally compact abelian groups in \cite{ACP10}. In \cite{CMP17} the problem  was analyzed for multiplicatively invariant spaces.

Our   main result  gives necessary and sufficient conditions for a $(\Gamma,\sigma)$-invariant space to be invariant under another group $\Delta$. The conditions are stated in terms of some closed subspaces of $L^2(\X)$ that we defined using the Zak transform and a proper partition of the dual group of $\T$, which is  group containing $\Gamma$ and $\Delta$, i.e. $\Gamma\subseteq \Delta\subseteq \T$. The characterization that we obtain is inspired in the one given in \cite{ACP10, ACP11} but, due to the more complicated context, it requires to deal with some more technical issues. In particular, we needed to prove a characterization of principal $(\Gamma,\sigma)$-invariant space, which are those generated by a single function of $L^2(\X)$. 
This result is new in this context and it is interested  by itself because gives a more comprehensive picture of $(\Gamma,\sigma)$-invariant spaces. 

As a application,  we solve the problem of finding the $(\Gamma,\sigma)$-invariant space with extra invariance that best approximates a given finite set of functions of $L^2(\X)$ in the sense of least-squares.

The paper is organized as follows. We first review the basics of  $(\Gamma,\sigma)$-invariant spaces and the Zak transform in Section \ref{sec: Gamma-spaces}. There, we also prove a characterization of principal $(\Gamma,\sigma)$-invariant spaces which is one of the new results of this paper (see Theorem \ref{thm:ppal-invariant}). In Section \ref{sec:extra-invariance} we present and prove our main result (Theorem \ref{thm:ppal}) where we give necessary and sufficient conditions for a 
 $(\Gamma,\sigma)$-invariant space to have extra invariance in a given ``bigger" group. To give deeper insight, we  describe the heuristic behind this result in Subsection \ref{sec:heuristic}. Finally, in Section \ref{sec:decomposable} 
we establish the connection between $(\Gamma,\sigma)$-invariant spaces with extra invariance and decomposable  multiplicatively invariant spaces. Once this connection is clear we solve the approximation problem.  

\subsection*{Notation:}
Given an LCA group $G$ written additively, we denote by $\wh G$ its Pontryagin  dual group. By $m_G$ we denote 
the Haar measure associated to $G$. Since the dual of the dual group is topologically isomorphic to the original group, for  $\xi\in\wh G$ and $x\in G$ we write $(x,\xi)$ to indicate the character $\xi$ applied to $x$ (i.e. $\xi(x)$) or the character  $x$ applied to $\xi$. For a subgroup  $H\subseteq G$, its annihilator is denoted by $H^*$ and is defined as 
$H^*=\{ \xi\in\wh G:\, (h,\xi)=1 \,\forall \,h\in H\}$. It is well known that if $H$ is closed, $H^*$ is a closed subgroup of $\wh G$. 
\section{Invariant spaces under the action of a discrete group}\label{sec: Gamma-spaces}
\subsection{$(\Gamma,\sigma)$-invariant spaces}
We are interested in invariant spaces with respect to unitary operators arising from the action of a discrete  LCA group $\Gamma$ on a $\sigma$-finite measure space $(\X, \mu)$. We shall work with actions that are  {\it quasi-$\Gamma$-invariant},
a notion  introduced in \cite{HSWW10} and then extended to the non abelian case in \cite{BHP14}. 
 
Fix $\Gamma$ a discrete countable LCA group. Let $(\X,\mu)$ be a $\sigma$-finite measure space and $\sigma:\Gamma\times\X\to\X$ a measurable action satisfying the following conditions:
\begin{enumerate}
 \item [(i)] for each $\g\in\Gamma$ the map $\sigma_{\g}:\X\to\X$ given by $\sigma_{\g}(x):=\sigma(\g,x)$ is $\mu$-
measurable;
\item [(ii)] $\sigma_{\g}(\sigma_{\g'}(x))=\sigma_{\g+\g'}(x)$, for all $\g,\g'\in\Gamma$ and for all $x\in\X$;
\item [(iii)] $\sigma_{e}(x)=x$ for all $x\in\X$, where $e$ is the identity of $\Gamma$.
\end{enumerate}
The action $\sigma$ is said to be {\it quasi-$\Gamma$-invariant} if there exists  a measurable function $J_{\sigma}:\Gamma\times\X\to\R^{+}$, called {\it Jacobian of $\sigma$}, such that
$d\mu(\sigma_{\g}(x))=J_{\sigma}(\g,x)d\mu(x).$
To each quasi-$\Gamma$-invariant action $\sigma$ we can associate a unitary representation  $\Pi$ of $\Gamma$ on $L^2(\X)$ given by
$\Pi_{\g}f(x)=J_{\sigma}(-\g,x)^{\frac1{2}}f(\sigma_{-\g}(x)).$ Note that $\Pi$ depends on $\sigma$ although it is not  explicit in the notation. 

\begin{definition}\label{def:invariant-spaces}
Given a quasi-$\Gamma$-invariant action $\sigma$, we say that a closed subspace $V$ of $L^2(\X)$ is  {\it $(\Gamma,\sigma)$-invariant} if
$$f\in V\Longrightarrow \Pi_{\g}f\in V, \,\, \textrm{ for any }\,\,\g\in\Gamma.$$
\end{definition}

When $L^2(\X)$ is separable, each $(\Gamma,\sigma)$-invariant space is of the form
$V=S_\Gamma(\A):=\overline{\mbox{span} }\{\Pi_{\g}\varphi:\g\in\Gamma, \varphi\in\A\}$ for some at most countable set $\A\subseteq L^2(\X)$ and we say that $V$ is generated by $\A$. When $\A$ is a finite set we say that $V$ is {\it finitely generated} and when  $\A=\{\varphi\}$ we write $S_\Gamma(\A)=S_\Gamma(\varphi)$ and we call it {\it principal } $\Gamma$-invariant space. 

\medskip

We shall say that  the quasi-$\Gamma$-invariant action $\sigma$ has the {\it tiling property} if there exists  a measurable subset $C_\Gamma\subseteq \X$ such that $\mu(\X\setminus\bigcup_{\g\in\Gamma}\sigma_{\g}(C_\Gamma))=0$ and $\mu(\sigma_{\g}(C_\Gamma)\cap\sigma_{\g'}(C_\Gamma))=0$ whenever $\g\neq\g'$. 
We call $C_\Gamma$ a {\it tilling set} for $\sigma$.

\subsection{The Zak transform}
Let  $\sigma$ be a quasi-$\Gamma$-invariant action having the tiling property, with tilling set  $C_{\Gamma}.$
Denote by  $\wh \Gamma$   the Pontryagin dual of $\Gamma$.
The space $L^2(\widehat{\Gamma}, L^2(C_\Gamma))$ consists of all measurable functions $\Phi:\widehat{\Gamma}\to  L^2(C_\Gamma)$ with finite norm, where the norm is given by 
$$\|\Phi\|:=\left(\int_{\widehat{\Gamma}}\|\Phi(\w)\|_{L^2(C_{\Gamma})}\,dm_{\wh \Gamma}(\w)\right)^{\frac1{2}}.$$

With this notation we have.
\begin{proposition}\cite[Proposition 3.3]{BHP15}\label{prop:Zak}
 The mapping $Z_\Gamma: L^2(\X) \longrightarrow  L^2(\widehat{\Gamma}, L^2(C_\Gamma))$ defined by
 $$Z_\Gamma[\psi](\alpha)(x):=\sum_{\g\in\Gamma}
 \Pi_{\g}\psi(x)(-\g, \alpha), \quad \alpha\in\widehat{\Gamma},\,\, x\in C_\Gamma$$
 is an isometric isomorphism and it satisfies
$Z_\Gamma[\Pi_{\g}\psi](\alpha)=(\g,\alpha)Z_\Gamma[\psi](\alpha)$.
\end{proposition}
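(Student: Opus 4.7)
The plan is to realize $Z_\Gamma$ as the composition of two natural isometric isomorphisms: first, a ``slicing'' map $L^2(\X)\to \ell^2(\Gamma,L^2(C_\Gamma))$ induced by the tiling of $\X$ under the action $\sigma$, and then the vector-valued Fourier transform $\ell^2(\Gamma,L^2(C_\Gamma))\to L^2(\widehat\Gamma,L^2(C_\Gamma))$ (Plancherel for Hilbert-space-valued series on the compact dual of the discrete group $\Gamma$). Throughout, we normalize $m_{\widehat\Gamma}$ so that $m_{\widehat\Gamma}(\widehat\Gamma)=1$, which is the standard choice dual to counting measure on $\Gamma$.

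For the first leg, I would set $\psi_\g:=\Pi_\g\psi\,\mathbbm{1}_{C_\Gamma}\in L^2(C_\Gamma)$ for each $\psi\in L^2(\X)$. The key computation is
\[
\|\psi_\g\|_{L^2(C_\Gamma)}^2=\int_{C_\Gamma}J_\sigma(-\g,x)\,|\psi(\sigma_{-\g}(x))|^2\,d\mu(x)=\int_{\sigma_{-\g}(C_\Gamma)}|\psi(y)|^2\,d\mu(y),
\]
where the second equality uses the defining transformation rule $d\mu(\sigma_{-\g}(x))=J_\sigma(-\g,x)\,d\mu(x)$. Summing over $\g\in\Gamma$ and invoking the tiling property (noting that $\{\sigma_{-\g}(C_\Gamma)\}_{\g\in\Gamma}$ is, up to measure zero, the same partition as $\{\sigma_\g(C_\Gamma)\}_{\g\in\Gamma}$) yields $\sum_{\g\in\Gamma}\|\psi_\g\|^2_{L^2(C_\Gamma)}=\|\psi\|_{L^2(\X)}^2$. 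Surjectivity onto $\ell^2(\Gamma,L^2(C_\Gamma))$ follows by reconstructing any $(f_\g)\in\ell^2(\Gamma,L^2(C_\Gamma))$ as the piecewise function $\psi$ defined on $\sigma_{-\g}(C_\Gamma)$ by $\psi(\sigma_{-\g}(x))=J_\sigma(-\g,x)^{-1/2}f_\g(x)$.

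For the second leg, I would invoke the standard Plancherel theorem for the pair $(\Gamma,\widehat\Gamma)$ in its Hilbert-valued form: the map $(f_\g)_{\g\in\Gamma}\mapsto \Phi(\alpha):=\sum_{\g\in\Gamma}f_\g(-\g,\alpha)$ is an isometric isomorphism from $\ell^2(\Gamma,L^2(C_\Gamma))$ onto $L^2(\widehat\Gamma,L^2(C_\Gamma))$, with convergence in the Bochner $L^2$ sense. Composing with the first leg gives the identity $\|Z_\Gamma[\psi]\|=\|\psi\|_{L^2(\X)}$ and surjectivity, since by construction $Z_\Gamma[\psi](\alpha)=\sum_\g\psi_\g(-\g,\alpha)$.

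Finally, the intertwining relation is a direct computation using property (ii) of the action: for any $\g_0\in\Gamma$,
\[
Z_\Gamma[\Pi_{\g_0}\psi](\alpha)=\sum_{\g\in\Gamma}\Pi_{\g+\g_0}\psi\,(-\g,\alpha)=(\g_0,\alpha)\sum_{\g'\in\Gamma}\Pi_{\g'}\psi\,(-\g',\alpha)=(\g_0,\alpha)Z_\Gamma[\psi](\alpha),
\]
after reindexing $\g'=\g+\g_0$ and using bicharacter identities. The only delicate step is the first leg: one must verify cleanly that the tiling property together with the cocycle identity for $J_\sigma$ (implicit in the consistency of $d\mu(\sigma_\g(x))=J_\sigma(\g,x)\,d\mu(x)$ with property (ii)) indeed identifies $L^2(\X)$ with $\ell^2(\Gamma,L^2(C_\Gamma))$; everything else is the standard Fourier-series machinery on the compact dual $\widehat\Gamma$.
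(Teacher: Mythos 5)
Your proof is correct and follows essentially the same route as the source: the paper itself does not prove Proposition \ref{prop:Zak} but imports it from \cite[Proposition 3.3]{BHP15}, where the argument is exactly your factorization through the tiling-induced slicing isomorphism (the map $\Phi$ that this paper recalls in Subsection \ref{sec:heuristic}) followed by Hilbert-space-valued Plancherel on the compact dual $\widehat\Gamma$ with normalized Haar measure. The only points worth making explicit are the ones you already flag: the cocycle identity for $J_\sigma$ (needed both for $\Pi_{\g}\Pi_{\g_0}=\Pi_{\g+\g_0}$ in the intertwining computation and for the consistency of the slicing), and that the exponent in the paper's displayed norm on $L^2(\widehat\Gamma,L^2(C_\Gamma))$ should be $\|\Phi(\w)\|^2_{L^2(C_\Gamma)}$, as your isometry computation implicitly assumes.
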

The map $Z_\Gamma$ it is well known and it is  called   {\it Zak transform}.  In \cite{HSWW10}, was generalized to study 
cyclic subspaces asociated to representations of certain LCA groups. It was 
extended to the noncomutative setting in \cite{BHP14} and used to characterized  $(\Gamma,\sigma)$-invariant spaces in terms of range functions in \cite{BHP15}.

In this paper we will consider the Zak transform associated with different groups.
In what follows we describe the setting where we shall work in, and  establish the connection between the Zak transforms involved. 

Let $\T$ be a  discrete and countable  LCA group such that $\Gamma$ is a subgroup of $\T$  with  $\T/\Gamma$ being finite; that is, $\Gamma$ is a uniform lattice of $\T$. Additionally, suppose that exists a  quasi-$\T$-invariant action $\sigma^{\T}$of $\T$ on $\X$ that extends $\sigma$. By this we mean that 
$\sigma^\T\mid_{\Gamma\times\X}=\sigma$. 
In this situation we can derive the following two properties:

\begin{lemma}\label{lem:extension}
 Let $J_{\sigma}$ and $J_{\sigma^\T}$ be the Jacobians associated to $\sigma$ and $\sigma^\T$ and let $\Pi$ and  $\Pi^{\T}$ be the correspondent  unitary representations induced by $\sigma$ and $\sigma^\T$, respectively. Then we have:
 \begin{enumerate}
  \item [(i)] For every $\g\in\Gamma$, $J_{\sigma^\T}(\g,x)=J_{\sigma}(\g,x)$, for $\mu$-a.e. $x\in\X$.
  \item [(ii)] For every $\g\in\Gamma$,  $\Pi^{\T}_\g=\Pi_\g$. 
 \end{enumerate}
\end{lemma}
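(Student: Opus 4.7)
The plan is to prove both statements by exploiting the uniqueness (up to $\mu$-null sets) of the Jacobian as a Radon–Nikodym derivative, together with the hypothesis $\sigma^{\T}\!\mid_{\Gamma\times\X}=\sigma$. Once (i) is established, (ii) will be a direct computation from the definition of the unitary representation recalled just before Definition \ref{def:invariant-spaces}.

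For (i), I would fix $\gamma\in\Gamma$ and read the defining relation $d\mu(\sigma_\gamma(x))=J_\sigma(\gamma,x)\,d\mu(x)$ in its integrated form. Concretely, for every nonnegative measurable $f:\X\to\R$,
\begin{equation*}
\int_\X f(\sigma_\gamma(x))\,d\mu(x)=\int_\X f(y)\,J_\sigma(\gamma,\sigma_{-\gamma}(y))^{-1}\,d\mu(y),
\end{equation*}
or equivalently the identity that characterizes $J_\sigma(\gamma,\cdot)$ as the Radon–Nikodym derivative of the pushforward $(\sigma_{-\gamma})_*\mu$ with respect to $\mu$. Since by hypothesis $\sigma^{\T}_\gamma(x)=\sigma_\gamma(x)$ for every $x\in\X$, the action maps appearing on the left-hand side of the analogous identity for $\sigma^{\T}$ coincide with those for $\sigma$, so both $J_\sigma(\gamma,\cdot)$ and $J_{\sigma^{\T}}(\gamma,\cdot)$ are Radon–Nikodym derivatives of the same measure with respect to $\mu$. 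By the essential uniqueness of such derivatives, they must agree $\mu$-almost everywhere, which is exactly (i).

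For (ii), I would simply substitute. By definition, for $f\in L^2(\X)$,
\begin{equation*}
\Pi^{\T}_\gamma f(x)=J_{\sigma^{\T}}(-\gamma,x)^{1/2}\,f(\sigma^{\T}_{-\gamma}(x)),\qquad \Pi_\gamma f(x)=J_{\sigma}(-\gamma,x)^{1/2}\,f(\sigma_{-\gamma}(x)).
\end{equation*}
Since $-\gamma\in\Gamma$, part (i) applied to $-\gamma$ gives $J_{\sigma^{\T}}(-\gamma,x)=J_\sigma(-\gamma,x)$ for $\mu$-a.e.\ $x$, and the extension hypothesis gives $\sigma^{\T}_{-\gamma}(x)=\sigma_{-\gamma}(x)$ for all $x$. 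Hence $\Pi^{\T}_\gamma f(x)=\Pi_\gamma f(x)$ for $\mu$-a.e.\ $x$, which is the required equality of operators in $L^2(\X)$.

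The only genuine care needed is in formulating (i) rigorously: the pointwise notation $d\mu(\sigma_\gamma(x))=J_\sigma(\gamma,x)\,d\mu(x)$ must be interpreted as an integral identity valid for all nonnegative measurable test functions (or all measurable subsets of $\X$), because that is what makes the Radon–Nikodym uniqueness argument apply and produces the ``$\mu$-a.e.'' qualifier in the conclusion. No deeper obstacle is expected, since the whole statement is essentially a consistency check that the auxiliary objects $J$ and $\Pi$ attached to a quasi-invariant action depend only on the action itself.
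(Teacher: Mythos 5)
Your proposal is correct and follows essentially the same route as the paper: the paper's proof of (i) is precisely the one-line chain $J_{\sigma^\T}(\g,x)\,d\mu(x)=d\mu(\sigma^\T_\g(x))=d\mu(\sigma_\g(x))=J_\sigma(\g,x)\,d\mu(x)$, of which your Radon--Nikodym uniqueness argument is just a more carefully spelled-out version, and (ii) is likewise obtained by direct substitution using (i).
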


\begin{proof}
 Item {\it (i)} is a direct consequence of the definition of the Jacobian and the relationship between $\sigma$ and $\sigma^\T$ since, for every $\g\in\Gamma$
 $$J_{\sigma^\T}(\g,x)d\mu(x)=d\mu(\sigma^\T_{\g}(x))=d\mu(\sigma_{\g}(x))=J_{\sigma}(\g,x)d\mu(x).$$
 Item {\it (ii)} follows from item {\it (i)}.
\end{proof}

When the action $\sigma^\T$ has the tiling property a tiling set for $\sigma$ can be constructed in terms of $C_\T$, the tiling set for $\sigma^\T$. Note that in particular  the tiling property of $\sigma^\T$ forces $\sigma$ to have it.
\begin{lemma}
 Let $C_\T$ be a tiling set for $\sigma^\T$ and let $\F=\{a_0,a_1,\dots,a_s\}\subseteq \T$ be a section for the quotient $\T/\Gamma$. Then, 
 \begin{equation}\label{eq:tiling-set}
 C_\Gamma:=\bigcup_{j=0}^s\sigma^\T_{-a_j}(C_\T) 
 \end{equation}
 is a tiling set for $\sigma$.
\end{lemma}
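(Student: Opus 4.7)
The plan is to verify the two defining properties of a tiling set for $\sigma$ directly from the tiling property of $\sigma^\T$, using Lemma~\ref{lem:extension}(ii) to pass freely between $\sigma_\g$ and $\sigma^\T_\g$ when $\g\in\Gamma$, and using that $\F$ is a section for $\T/\Gamma$.

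First I would simplify the translates of $C_\Gamma$. For any $\g\in\Gamma$, since $\sigma_\g=\sigma^\T_\g$ acts by the group property of $\sigma^\T$,
\[
\sigma_\g(C_\Gamma)=\bigcup_{j=0}^{s}\sigma^\T_\g\bigl(\sigma^\T_{-a_j}(C_\T)\bigr)=\bigcup_{j=0}^{s}\sigma^\T_{\g-a_j}(C_\T).
\]
Hence $\bigcup_{\g\in\Gamma}\sigma_\g(C_\Gamma)=\bigcup_{\g\in\Gamma}\bigcup_{j=0}^{s}\sigma^\T_{\g-a_j}(C_\T)$. Because $\F$ is a section for $\T/\Gamma$, every $\tau\in\T$ is uniquely expressed as $\tau=\g-a_j$ with $\g\in\Gamma$ and $a_j\in\F$ (adjusting signs, since $-\F$ is also a section). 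Therefore the union above equals $\bigcup_{\tau\in\T}\sigma^\T_\tau(C_\T)$, which covers $\X$ up to a set of $\mu$-measure zero, by the tiling property of $\sigma^\T$. This proves the covering condition for $C_\Gamma$.

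Next I would verify the disjointness condition. Take $\g,\g'\in\Gamma$ with $\g\neq\g'$, and consider
\[
\sigma_\g(C_\Gamma)\cap\sigma_{\g'}(C_\Gamma)=\bigcup_{j,k=0}^{s}\bigl(\sigma^\T_{\g-a_j}(C_\T)\cap\sigma^\T_{\g'-a_k}(C_\T)\bigr).
\]
By the tiling property of $\sigma^\T$, each summand has positive $\mu$-measure only if $\g-a_j=\g'-a_k$ in $\T$, i.e.\ $a_k-a_j=\g'-\g\in\Gamma$. But distinct elements of $\F$ lie in distinct cosets of $\Gamma$, so $a_k-a_j\in\Gamma$ forces $j=k$, and then $\g=\g'$, contradicting our assumption. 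Hence every summand is $\mu$-null, so $\mu(\sigma_\g(C_\Gamma)\cap\sigma_{\g'}(C_\Gamma))=0$.

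Combining the two items shows that $C_\Gamma$ is a tiling set for $\sigma$. There is no serious obstacle here; the only point to be a little careful about is the bookkeeping that $\{\g-a_j:\g\in\Gamma,\,0\le j\le s\}$ enumerates $\T$ exactly once, which is precisely the content of $\F$ being a section for $\T/\Gamma$, and that identities of sets in $\X$ involving $\sigma$ can be replaced by the corresponding identities for $\sigma^\T$ on account of Lemma~\ref{lem:extension}(ii).
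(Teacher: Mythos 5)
Your proof is correct and follows essentially the same route as the paper's: decompose the translates of $C_\Gamma$ via $\sigma_\g(C_\Gamma)=\bigcup_j\sigma^\T_{\g-a_j}(C_\T)$, then use the tiling property of $\sigma^\T$ together with the fact that $\F$ is a section for $\T/\Gamma$ to get both the covering and the a.e.\ disjointness. If anything, you are slightly more explicit than the paper in justifying why each intersection $\sigma^\T_{\g-a_j}(C_\T)\cap\sigma^\T_{\g'-a_k}(C_\T)$ is null (namely that $\g-a_j=\g'-a_k$ would force $j=k$ and $\g=\g'$), a point the paper leaves implicit.
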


\begin{proof}
 First, let $\g\neq\g'\in\Gamma$. Then, using the properties of the actions we have 
 \begin{align*}
  \sigma_\g(C_\Gamma)\cap\sigma_{\g'}(C_\Gamma)&=\sigma_\g(\bigcup_{j=0}^s\sigma^\T_{-a_j}(C_\T))\cap\sigma_{\g'}(\bigcup_{k=0}^s\sigma^\T_{-a_k}(C_\T))\\
  &=(\bigcup_{j=0}^s\sigma^\T_{-a_j+\g}(C_\T))\cap(\bigcup_{k=0}^s\sigma^\T_{-a_k+\g'}(C_\T))\\
   &=\bigcup_{j=0}^s\bigcup_{k=0}^s(\sigma^\T_{-a_j+\g}(C_\T))\cap(\sigma^\T_{-a_k+\g'}(C_\T)).\\
 \end{align*}
Since $\mu(\sigma^\T_{-a_j+\g}(C_\T))\cap(\sigma^\T_{-a_k+\g'}(C_\T))=0$ for all $j,k$ because $C_\T$ is a tiling set for $\sigma^\T$, we obtain that $\mu(\sigma_\g(C_\Gamma)\cap\sigma_{\g'}(C_\Gamma))=0$.

Finally, since $\bigcup_{\g\in\Gamma}\sigma_\g(C_\Gamma)= \bigcup_{\g\in\Gamma}\bigcup_{j=0}^s\sigma^\T_{\g-a_j}(C_\T)=\bigcup_{\tau\in\T}\sigma^\T_{\tau}(C_\T)$ the result follows. 
\end{proof}

In order to simplify the notation and as it is suggested by Lemma \ref{lem:extension}, we shall, from now on, omit the superindexes on $\sigma^\T$ and $\Pi^{\T}$ and simply write $\sigma$ and $\Pi$ for the actions and representations of $\Gamma$ and $\T$ on $\X$. 

Let $\Gamma^*$ be the annihilator of $\Gamma$. As it well known, $\Gamma^*$ turns out to be a subgroup of $\widehat{\T}$ such that  $\Gamma^*\approx\widehat{\T/\Gamma}$, where $\approx$ means ``isomorphic groups".  In particular, since $\T/\Gamma$ is finite it holds that $\T/\Gamma\approx\widehat{\T/\Gamma}$ and then  one has that $\#\Gamma^*=\#\T /\Gamma$.
Take $\Omega\subseteq \widehat{\T}$ a measurable section of $\widehat{\T}/\Gamma^*$ which existence is guaranteed  by \cite[Lemma 1.1.]{Mac52} and \cite[Theorem 1]{FG68}. Then, since $\widehat{\Gamma}\approx\widehat{\T}/\Gamma^*$ one can  identify $\widehat{\Gamma}$ with $\Omega$.  Under this identification, $L^2(\Omega, L^2(C_\Gamma))$ is isometricaly isomorphic to $L^2(\widehat{\Gamma}, L^2(C_\Gamma))$ and this allows us to have the mapping  $Z_\Gamma$ of Proposition \ref{prop:Zak} defined on $\Omega$ instead 
$\widehat{\Gamma}$. Moreover, $Z_\Gamma$ can be extended to the whole $\widehat{\T}$ in a $\Gamma^*$-periodic way since, for $f\in L^2(\X)$
\begin{equation}\label{eq:Zak-periodic}
Z_\Gamma[f](\w+\g^*)(x)=\sum_{\g\in\Gamma}
\Pi_{\g}f(x)(-\g, \w+\g^*)=\sum_{\g\in\Gamma}
 \Pi_{\g}f(x)(-\g, \w)=Z_\Gamma[f](\w)(x),
 \end{equation}
for every $\g^*\in\Gamma^*$, for $m_{\widehat{\T}}$-a.e. $\w\in\Omega$ and for $\mu$-a.e. $x\in C_\Gamma$. 

We can now consider   the mapping $Z_\T$ defined in Proposition \ref{prop:Zak} for the group $\T$. That is, $Z_\T:L^2(\X) \longrightarrow  L^2(\widehat{\T}, L^2(C_\T))$ is given by 
$$ Z_\T[f](\hat{\tau})(x)=\sum_{\tau\in\T}
\Pi_{\tau}f(x)(-\tau, \htau),$$
 where $f\in L^2(\X)$, $\htau\in\wh\T$ and $x\in C_\T$.
The isomorphisms $Z_\Gamma$ and $Z_\T$ are related as it is shown in the next
lemma.

\begin{lemma}\label{lem:Zak-comparation}
Let $Z_\Gamma:L^2(\X) \longrightarrow  L^2(\W, L^2(C_\Gamma))$ and 
$Z_\T:L^2(\X) \longrightarrow  L^2(\widehat{\T}, L^2(C_\T))$ be the isometric isomorphisms defined in Proposition \ref{prop:Zak} for the groups $\Gamma$ and $\T$ respectively. Then, the following relationship holds:
\begin{equation*}
Z_\T[f](\w)(x)=
\sum_{j=0}^sJ_\sigma(-a_j,x)^{\frac1{2}}Z_\Gamma[f](\w)(\sigma_{-a_j}(x))(-a_j,\w),
\end{equation*}
for every $f\in L^2(\X)$, for $m_{\widehat{\T}}$-a.e. $\w\in\Omega$ and for $\mu$-a.e. $x\in C_\T$.

Moreover, for every $\g^*\in\Gamma^*$,
\begin{equation}\label{eq:Zak-Upsilon}
Z_\T[f](\w+\g^*)(x)=
\sum_{j=0}^sJ_\sigma(-a_j,x)^{\frac1{2}}Z_\Gamma[f](\w)(\sigma_{-a_j}(x))(-a_j,\w+\g^*).
\end{equation}
\end{lemma}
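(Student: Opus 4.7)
The plan is a direct computation starting from the definition of $Z_\T$, using the coset decomposition of $\T$ modulo $\Gamma$ supplied by the section $\F=\{a_0,\dots,a_s\}$.

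First I would write every $\tau\in\T$ uniquely as $\tau=\gamma+a_j$ with $\gamma\in\Gamma$ and $a_j\in\F$, and split the defining series as
\begin{equation*}
Z_\T[f](\w)(x)=\sum_{j=0}^{s}\sum_{\g\in\Gamma}\Pi_{\g+a_j}f(x)\,(-\g-a_j,\w).
\end{equation*}
Since $\Pi$ is a unitary representation of $\T$, we have $\Pi_{\g+a_j}=\Pi_{a_j}\Pi_{\g}$, and the character splits as $(-\g-a_j,\w)=(-\g,\w)(-a_j,\w)$. Applying the explicit formula $\Pi_{a_j}g(x)=J_\sigma(-a_j,x)^{1/2}g(\sigma_{-a_j}(x))$ with $g=\Pi_\g f$, and pulling the factor $J_\sigma(-a_j,x)^{1/2}(-a_j,\w)$ out of the inner sum (it does not depend on $\g$), I obtain
\begin{equation*}
Z_\T[f](\w)(x)=\sum_{j=0}^{s}J_\sigma(-a_j,x)^{1/2}(-a_j,\w)\sum_{\g\in\Gamma}\Pi_\g f(\sigma_{-a_j}(x))\,(-\g,\w).
\end{equation*}

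Next I would recognize the inner series as $Z_\Gamma[f](\w)(\sigma_{-a_j}(x))$. For this one must justify that $\sigma_{-a_j}(x)\in C_\Gamma$ whenever $x\in C_\T$; but this is immediate from the construction \eqref{eq:tiling-set}, since $\sigma_{-a_j}(x)\in\sigma_{-a_j}(C_\T)\subseteq C_\Gamma$. This yields the first identity.

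For the second identity, the computation above is valid for any $\w\in\wh\T$ (not just for $\w\in\Omega$), because both Zak transforms are defined on the full dual group. Applying the first identity with $\w+\g^*$ in place of $\w$ and then invoking the $\Gamma^*$-periodicity \eqref{eq:Zak-periodic} of $Z_\Gamma$, that is $Z_\Gamma[f](\w+\g^*)=Z_\Gamma[f](\w)$, produces exactly \eqref{eq:Zak-Upsilon}. The main (and only) delicate point is the legitimacy of interchanging the two sums in the first step, which is handled by viewing the series as the $L^2$-convergent expansion defining $Z_\T[f]$ and noting that the double sum over $\F\times\Gamma$ is an unconditional reordering of the sum over $\T$; everything else is bookkeeping with the cocycle $J_\sigma$ and the characters.
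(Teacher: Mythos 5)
Your proposal is correct and follows essentially the same route as the paper: decompose $\tau=\g+a_j$ via the section $\F$, factor $\Pi_{\g+a_j}=\Pi_{a_j}\Pi_\g$ and the character, apply the explicit formula for $\Pi_{a_j}$, and recognize the inner sum as $Z_\Gamma[f](\w)(\sigma_{-a_j}(x))$, with the second identity following from the $\Gamma^*$-periodicity \eqref{eq:Zak-periodic}. Your added remarks on $\sigma_{-a_j}(x)\in C_\Gamma$ and on the unconditional reordering of the series are sound refinements of points the paper leaves implicit.
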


\begin{proof}
Using that  $\F=\{a_0,\dots, a_s\}$ is a section for the quotient $\T/\Gamma$, we have
\begin{align*}
Z_\T[f](\w)(x)&=\sum_{\tau\in\T}
\Pi_{\tau}f(x)(-\tau, \w)
 =\sum_{j=0}^s\sum_{\g\in\Gamma}
\Pi_{a_j}\Pi_{\g}f (x)(-\g, \w)(-a_j, \w)\\
&=\sum_{j=0}^sJ_\sigma(-a_j,x)^{\frac1{2}} \left[\sum_{\g\in\Gamma}
\Pi_{\g}f(\sigma_{-a_j}(x))(-\g, \w)\right](-a_j, \w)\\
&=\sum_{j=0}^sJ_\sigma(-a_j,x)^{\frac1{2}} Z_\Gamma[f](\w)(\sigma_{-a_j}(x))(-a_j, \w).
\end{align*}
Using \eqref{eq:Zak-periodic}, equation \eqref{eq:Zak-Upsilon} is obtained in a similar way.
\end{proof}

\begin{remark}\label{rem:Zak-matrix} 
Note that from Lemma \ref{lem:Zak-comparation} the relationship between $Z_\Gamma$ and $Z_\T$ can be formulated in terms of matrices as follows: For $\w\in\W$ and $x\in C_\T$ define 
$$V(\w,x)=(Z_\Gamma[f](\w)(\sigma_{-a_0}(x)),\dots, Z_\Gamma[f](\w)(\sigma_{-a_s}(x)))^t$$
and 
$$U(\w,x)=(Z_\T[f](\w+\g_0^*)(x),\dots,Z_\T[f](\w+\g_s^*)(x))^t,$$
where $\Gamma^*=\{\g_0^*,\dots, \g_s^*\}$. Furthermore, 
let $D(\w,x)$ be the $(s+1)\times (s+1)$ diagonal matrix defined by
$$D(\w,x)=\textrm{diag}(J_\sigma(-a_0,x)^{\frac1{2}}(-a_0,\w),\dots,J_\sigma(-a_s,x)^{\frac1{2}}(-a_s,\w))$$
and let $F$ be the $(s+1)\times (s+1)$ matrix 
$$F=\begin{bmatrix}
(-a_0,\g^*_0)&\dots&(-a_s,\g^*_0)\\
 \vdots & \ddots & \vdots\\
 (-a_0,\g^*_s)&\dots&(-a_s,\g^*_s)
\end{bmatrix},
$$
which is the Discrete Fourier transform matrix associated to the group $\T/\Gamma$.
Then, by Lemma \ref{lem:Zak-comparation}, we have that 
$$F . D(\w,x) . V(\w,x)=U(\w,x),$$
and this shows that $U(\w,x)$ is -up to $D(\w,x)$- the Fourier transform of $V(\w,x)$.
\end{remark}

In what follows we present a result that characterizes functions belonging to a principal 
$(\Gamma,\sigma)$-invariant space. It is in the spirit of \cite[Theorem 2.14]{dBDVR94}, as well as \cite[Theorem 4.3]{ACP11} and \cite[Theorem 4.4]{ACP10} (see also \cite{dBDVRI94,RS95}).

\begin{theorem}\label{thm:ppal-invariant}
Let $\psi\in L^2(\X)$. Then, if $f\in S_\Gamma(\psi)$, there exists a measurable $\Gamma^*$-periodic function $h$ defined on $\wh\T$ such that 
$$Z_\T[f](\htau)=h(\htau)Z_\T[\psi](\htau),$$
 for $m_{\wh\T}$-a.e. $\htau\in\wh\T$.
Moreover, if $f\in L^2(\X)$ is such that $Z_\T[f](\htau)=h(\htau)Z_\T[\psi](\htau),$ $m_{\wh\T}$-a.e. $\htau\in\wh\T$ for some measurable  $\Gamma^*$-periodic function $h$ defined on $\wh\T$, then $f\in S_\Gamma(\psi)$.
\end{theorem}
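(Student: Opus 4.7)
My plan is to first reduce the problem to a multiplier description on the $Z_\Gamma$ side, and then transfer the statement across to $Z_\T$ via Lemma~\ref{lem:Zak-comparation} and Remark~\ref{rem:Zak-matrix}. The starting point is the intertwining identity $Z_\Gamma[\Pi_\g\psi](\omega)=(\g,\omega)Z_\Gamma[\psi](\omega)$ from Proposition~\ref{prop:Zak}: it immediately handles finite linear combinations, because if $f=\sum_{\g\in F}c_\g\Pi_\g\psi$ then $Z_\Gamma[f]=p\cdot Z_\Gamma[\psi]$ with the trigonometric polynomial $p(\omega)=\sum_{\g\in F}c_\g(\g,\omega)$, and $p$ is automatically $\Gamma^*$-periodic since $(\g,\g^*)=1$ for all $\g\in\Gamma$, $\g^*\in\Gamma^*$.

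The key step is passing to the closure. I would introduce the finite positive Borel measure $d\mu_\psi(\omega):=\|Z_\Gamma[\psi](\omega)\|_{L^2(C_\Gamma)}^2\,dm_{\wh\T}(\omega)$ on $\Omega$ together with the multiplication operator $M_\psi\colon L^2(\Omega,\mu_\psi)\to L^2(\Omega,L^2(C_\Gamma))$, $M_\psi(h)=h\cdot Z_\Gamma[\psi]$. A direct computation shows $M_\psi$ is an isometry, so its range is closed, and the image of trigonometric polynomials under $M_\psi$ is exactly $Z_\Gamma(\textrm{span}\{\Pi_\g\psi:\g\in\Gamma\})$. Since $\Omega\cong\wh\Gamma$ is a compact abelian group and $\mu_\psi$ is a finite Borel measure there, the trigonometric polynomials indexed by $\Gamma$ are dense in $L^2(\Omega,\mu_\psi)$ by Stone--Weierstrass. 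Combining these two observations gives the intermediate characterization
$$Z_\Gamma(S_\Gamma(\psi))=\{h\cdot Z_\Gamma[\psi]:h\in L^2(\Omega,\mu_\psi)\}.$$
For the forward direction I then extract $h$ on $\Omega$, extend it $\Gamma^*$-periodically to $\wh\T$ (choosing $h=0$ on $\{\mu_\psi=0\}$ to fix a canonical representative), and plug $Z_\Gamma[f]=hZ_\Gamma[\psi]$ into Lemma~\ref{lem:Zak-comparation}. Because $h(\omega+\g^*)=h(\omega)$ it factors out of the sum over $j$, and what remains reassembles to $Z_\T[\psi](\omega+\g^*)(x)$, delivering the desired identity $Z_\T[f](\htau)=h(\htau)Z_\T[\psi](\htau)$ for $m_{\wh\T}$-a.e.\ $\htau$.

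For the converse I would lean on Remark~\ref{rem:Zak-matrix}, which rewrites Lemma~\ref{lem:Zak-comparation} as $U(\omega,x)=F\cdot D(\omega,x)\cdot V(\omega,x)$ with $F$ the invertible DFT matrix of $\T/\Gamma$ and $D(\omega,x)$ a diagonal matrix whose entries are non-zero almost everywhere. If $Z_\T[f]=hZ_\T[\psi]$ with $h$ $\Gamma^*$-periodic, then $U^f(\omega,x)=h(\omega)U^\psi(\omega,x)$, since $h$ is constant on each coset $\omega+\Gamma^*$. Inverting $FD$ yields $V^f(\omega,x)=h(\omega)V^\psi(\omega,x)$, equivalently $Z_\Gamma[f](\omega)(y)=h(\omega)Z_\Gamma[\psi](\omega)(y)$ for almost every $y\in C_\Gamma=\bigcup_j\sigma_{-a_j}(C_\T)$. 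By the intermediate characterization this places $f$ in $S_\Gamma(\psi)$.

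The main obstacle I expect is the density step feeding into the intermediate characterization: one has to confirm that the range of $M_\psi$ is genuinely closed and that trigonometric polynomials indexed by $\Gamma$ are dense in $L^2(\Omega,\mu_\psi)$. Recognising $\mu_\psi$ as a finite Borel measure on the compact abelian group $\wh\Gamma$, with total mass $\|\psi\|^2$, should make the density routine, but care is needed because $h$ is only determined on $\{\mu_\psi>0\}$ and one must choose it measurably and $\Gamma^*$-periodically elsewhere. A secondary technicality is tracking null sets in the matrix-inversion step -- specifically, verifying that $D(\omega,x)$ is invertible off a $\mu\otimes m_{\wh\T}$-null set, which follows from the positivity of the Jacobian $J_\sigma(-a_j,x)$.
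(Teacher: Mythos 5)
Your proposal is correct, and it reaches the theorem by a genuinely different route for the core step. Where you establish the multiplier description of $S_\Gamma(\psi)$ on the $Z_\Gamma$ side via the isometry $M_\psi\colon L^2(\Omega,\mu_\psi)\to L^2(\Omega,L^2(C_\Gamma))$, $d\mu_\psi=\|Z_\Gamma[\psi](\w)\|^2\,dm_{\wh\T}(\w)$, combined with Stone--Weierstrass density of the characters $(\g,\cdot)$ in $L^2(\Omega,\mu_\psi)$, the paper instead proves this as a separate lemma by first identifying $S_\Gamma(\psi)^\perp$ through the vanishing of the Fourier coefficients of $\w\mapsto\langle Z_\Gamma[g](\w),Z_\Gamma[\psi](\w)\rangle$, then writing down the explicit multiplier $h_g(\w)=\langle Z_\Gamma[g](\w),Z_\Gamma[\psi](\w)\rangle/\|Z_\Gamma[\psi](\w)\|^2$ on $E_\psi$ and verifying that $Z_\Gamma[Pg]=h_gZ_\Gamma[\psi]$ for the orthogonal projection $P$ onto $S_\Gamma(\psi)$; for the converse inclusion it simply cites the range-function characterization of \cite{BHP15}. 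Your argument is more self-contained (no appeal to \cite{BHP15}) and gives the closed-range/isometry mechanism explicitly, at the modest cost of having to justify that $\mu_\psi$ is a finite Borel measure on the compact metrizable group $\wh\Gamma\cong\Omega$ so that trigonometric polynomials are dense; the paper's route instead produces a canonical formula for $h$ in terms of $f$ and $\psi$, which is occasionally useful elsewhere. The transfer to $Z_\T$ in the forward direction (periodize $h$ over $\Gamma^*$ and factor it out of the sum in Lemma~\ref{lem:Zak-comparation}) is identical to the paper's. Notably, the paper's written proof stops after the forward direction and never spells out the ``moreover'' half, whereas your inversion of $U(\w,x)=F\,D(\w,x)\,V(\w,x)$ --- legitimate since $F$ is unitary and the diagonal entries $J_\sigma(-a_j,x)^{1/2}(-a_j,\w)$ never vanish --- supplies exactly the missing reduction back to $Z_\Gamma[f]=hZ_\Gamma[\psi]$ on $C_\Gamma=\bigcup_j\sigma_{-a_j}(C_\T)$, so your write-up is in this respect more complete than the paper's.
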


For the proof we will need the following lemma. 

\begin{lemma}\label{lem:ppal-charcaterization}
Let $\psi\in L^2(\X)$. Then the following hold true:
\begin{enumerate}
\item [(i)] If $f\in S_\Gamma(\psi)$, then there exists a measurable function $h$ defined on $\Omega$ such that $Z_\Gamma[f](\w)=h(\w)Z_\Gamma[\psi](\w)$, for $m_{\wh\T}$-a.e. $\w\in\Omega$.
\item [(ii)] If $f\in L^2(\X)$ is such that $Z_\Gamma[f](\w)=h(\w)Z_\Gamma[\psi](\w)$ $m_{\wh\T}$-a.e. $\w\in\Omega$ for some measurable  function $h$ defined on $\Omega$ then $f\in S_\Gamma(\psi)$.
\end{enumerate}
\end{lemma}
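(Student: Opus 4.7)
The plan is to reduce both parts of the lemma to the range-function description of $(\Gamma,\sigma)$-invariant spaces established in \cite{BHP15}: a closed subspace $V\subseteq L^2(\X)$ is $(\Gamma,\sigma)$-invariant if and only if there is a measurable range function $J_V$ assigning to each $\w\in\Omega$ a closed subspace of $L^2(C_\Gamma)$ so that $f\in V \Longleftrightarrow Z_\Gamma[f](\w)\in J_V(\w)$ for $m_{\wh\T}$-a.e.\ $\w$; moreover, for $V=S_\Gamma(\A)$ one has $J_V(\w)=\overline{\mbox{span}}\{Z_\Gamma[\varphi](\w):\varphi\in\A\}$. In the principal case $\A=\{\psi\}$ the range function becomes the at most one-dimensional subspace $\mbox{span}\{Z_\Gamma[\psi](\w)\}$, and both implications can be read off from this description.

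For (i), I take $f\in S_\Gamma(\psi)$ and use that $Z_\Gamma[f](\w)\in\mbox{span}\{Z_\Gamma[\psi](\w)\}$ for a.e.\ $\w$. On the set $E:=\{\w\in\Omega : Z_\Gamma[\psi](\w)\neq 0\}$ this forces a unique scalar factor, which I define explicitly by
$$h(\w):=\frac{\langle Z_\Gamma[f](\w),\,Z_\Gamma[\psi](\w)\rangle_{L^2(C_\Gamma)}}{\|Z_\Gamma[\psi](\w)\|_{L^2(C_\Gamma)}^{2}},$$
extended by $h\equiv 0$ on $\Omega\setminus E$. The measurability of $\w\mapsto Z_\Gamma[f](\w)$ and $\w\mapsto Z_\Gamma[\psi](\w)$ as $L^2(C_\Gamma)$-valued maps transfers to $h$, and the equality $Z_\Gamma[f]=h\,Z_\Gamma[\psi]$ holds $m_{\wh\T}$-a.e.\ (trivially on $\Omega\setminus E$, where both sides vanish). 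Part (ii) is then immediate from the same criterion: if $Z_\Gamma[f](\w)=h(\w)Z_\Gamma[\psi](\w)$ a.e., then $Z_\Gamma[f](\w)$ lies in $J_{S_\Gamma(\psi)}(\w)=\mbox{span}\{Z_\Gamma[\psi](\w)\}$ for a.e.\ $\w$, whence $f\in S_\Gamma(\psi)$.

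The main point that requires care is the explicit identification of the range function of a principal $(\Gamma,\sigma)$-invariant space with the pointwise span of $Z_\Gamma[\psi](\w)$. If this precise form is not recorded in \cite{BHP15}, I would verify it by hand: for $f=\sum_{\g\in F} c_\g\,\Pi_\g\psi$ with $F\subset\Gamma$ finite, the intertwining relation of Proposition \ref{prop:Zak} yields $Z_\Gamma[f](\w)=\bigl(\sum_\g c_\g(\g,\w)\bigr)Z_\Gamma[\psi](\w)$, so $Z_\Gamma[f](\w)\in\mbox{span}\{Z_\Gamma[\psi](\w)\}$; a general $f\in S_\Gamma(\psi)$ is then reached by approximation, using the isometry of $Z_\Gamma$ and extraction of an a.e.\ convergent subsequence to preserve the pointwise inclusion. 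The reverse inclusion needed for (ii) is provided by the closedness of $\{G\in L^2(\Omega,L^2(C_\Gamma)) : G(\w)\in\mbox{span}\{Z_\Gamma[\psi](\w)\}\text{ a.e.}\}$ in $L^2(\Omega,L^2(C_\Gamma))$, which is routine since this set is the kernel of a continuous projection-type operator built from the explicit formula for $h$ above. No serious obstacle is expected, as the whole argument parallels the classical treatment of principal shift-invariant spaces with the Fourier transform replaced by $Z_\Gamma$.
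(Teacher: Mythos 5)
Your proof is correct, but it justifies part (i) by a different route than the paper. You invoke the full range-function characterization of \cite{BHP15} in both directions: the ``only if'' direction gives $Z_\Gamma[f](\w)\in J_{S_\Gamma(\psi)}(\w)=\mathrm{span}\{Z_\Gamma[\psi](\w)\}$ pointwise a.e., after which the scalar is forced to be $h(\w)=\langle Z_\Gamma[f](\w),Z_\Gamma[\psi](\w)\rangle/\|Z_\Gamma[\psi](\w)\|^2$ on the set where $Z_\Gamma[\psi](\w)\neq0$. The paper instead proves (i) by a self-contained projection argument: it first identifies $S_\Gamma(\psi)^\perp$ as $\{g:\langle Z_\Gamma[g](\w),Z_\Gamma[\psi](\w)\rangle=0 \text{ a.e.}\}$ via a Fourier-coefficient computation on $\Omega$, then defines the bounded operator $Q(Z_\Gamma[g])(\w)=h_g(\w)Z_\Gamma[\psi](\w)$ (with exactly your formula for $h_g$) and shows $Q$ coincides with the orthogonal projection onto $Z_\Gamma[S_\Gamma(\psi)]$ by checking agreement on the orthogonal complement and on the dense set $\{Z_\Gamma[\Pi_\g\psi]\}$; part (i) then follows since $Pf=f$ for $f\in S_\Gamma(\psi)$. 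The paper's route yields the stronger byproduct $Z_\Gamma[Pg]=h_g\,Z_\Gamma[\psi]$ for arbitrary $g\in L^2(\X)$, i.e.\ an explicit formula for the projection onto a principal space, and only uses the cited structural theorem for the converse direction (ii), exactly as you do. Your route is shorter but depends on the pointwise-span description of the range function of a generated space being available in the cited reference; your fallback verification of that description by hand is sound, though the closedness argument you sketch for the converse inclusion in (ii) would still need the orthogonal-complement identity (your set $M$ being closed shows $Z_\Gamma[S_\Gamma(\psi)]\subseteq M$, and one must separately rule out a nonzero component of $M$ orthogonal to $Z_\Gamma[S_\Gamma(\psi)]$); since you primarily rely on the cited theorem for (ii), as the paper does, this is not a gap.
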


\begin{proof}
$(i)$. We begin by proving that 
\begin{equation}\label{eq:pall-orthogonal}
S_\Gamma(\psi)^\perp=\{g\in L^2(\X):\, \langle Z_\Gamma[g](\w), Z_\Gamma[\psi](\w)\rangle_{L^2(C)}=0, \,\,m_{\wh\T}\textrm{-a.e. }\w\in\Omega\}. 
\end{equation}
Let $g\in L^2(\X)$ and define, for $\w\in\Omega$,  $F_g(\w):=\langle Z_\Gamma[g](\w), Z_\Gamma[\psi](\w)\rangle_{L^2(C)}$. Then, $F_g\in L^1(\Omega)\subseteq L^2(\Omega)$ and for all $\gamma\in\Gamma$ we have that 
\begin{align*}
  \widehat{F_g}(\gamma)=&\int_{\Omega}(-\g, \w)\langle Z_\Gamma[g](\w), Z_\Gamma[\psi](\w)\rangle_{L^2(C)}\,dm_{\widehat{\T}}(\w)\\
  =&\int_{\Omega}\langle Z_\Gamma[g](\w), (\g, \w)Z_\Gamma[\psi](\w)\rangle_{L^2(C)}\,dm_{\widehat{\T}}(\w)\\
   =&\langle Z_\Gamma[g], Z_\Gamma[\Pi_\g\psi]\rangle=\langle g, \Pi_\g\psi\rangle.
\end{align*}
Thus, noticing that  a function in $L^2(\Omega)$ is zero if and only if all its Fourier coefficients are zero and that $S_\Gamma(\psi)^\perp=\{g\in L^2(\X):\, \langle g, \Pi_\g\psi\rangle=0, \forall\,\,\g\in\Gamma\}$, \eqref{eq:pall-orthogonal} follows.  

Now, given $g\in L^2(\X)$, define  $h_g(\w):=\frac{\langle Z_\Gamma[g](\w), Z_\Gamma[\psi](\w)\rangle_{L^2(C)}}{\|Z_\Gamma[\psi](\w)\|^2_{L^2(C)}}{\bf 1 }_{E_\psi}(\w)$ for $m_{\wh\T}$-a.e. $\w\in\Omega$,     where $E_\psi:=\{\w\in\Omega:\, Z_\Gamma[\psi](\w)\neq0\}$. Then, $|h_g(\w)|^2\|Z_\Gamma[\psi](\w)\|^2_{L^2(C)}\leq \|Z_\Gamma[g](\w)\|^2_{L^2(C)}$ $m_{\wh\T}$-a.e. $\w\in\Omega$.

Let  $P$ be  the orthogonal projection of $L^2(\X)$ onto $S_\Gamma(\psi)$, then 
\begin{equation}\label{eq:proj-hg}
Z_\Gamma[Pg](\w)=h_g(\w)Z_\Gamma[\psi](\w)
\end{equation}
 for $m_{\wh\T}$-a.e. $\w\in\Omega$.
 Indeed, 
since $Z_\Gamma$ is an isometry, it is easily seen that $Z_\Gamma[Pg]=\calP Z_\Gamma[g]$ where $\calP$ denotes the orthogonal projection of $L^2(\Omega, L^2(C))$ onto $Z_\Gamma[S_\Gamma(\psi)]$. 
Define now $Q: L^2(\Omega, L^2(C))\to L^2(\Omega, L^2(C))$ by $Q(Z_\Gamma[g])(\w)=h_g(\w)Z_\Gamma[\psi](\w)$ for $m_{\wh\T}$-a.e. $\w\in\Omega$. Then,  $Q$ is well-defined and continuous since 
$$\|Q(Z_\Gamma[g])\|^2 = \int _\Omega|h_g(\w)|^2\|Z_\Gamma[\psi](\w)\|^2_{L^2(C)}\,dm_{\widehat{\T}}(\w)\leq \|Z_\Gamma[g]\|^2.$$
Since $Z_\Gamma[S_\Gamma(\psi)^\perp]=(Z_\Gamma[S_\Gamma(\psi)])^\perp$ and \eqref{eq:pall-orthogonal} holds, we have that $Q(Z_\Gamma[g])=0$ if $g\in S_\Gamma(\psi)$ and thus $Q=\calP$ on $(Z_\Gamma[S_\Gamma(\psi)])^\perp$. To see that $Q$ and $\calP$ also agree on $Z_\Gamma[S_\Gamma(\psi)]$, take $\g\in\Gamma$ and note that for $g=\Pi_\g\psi$, $h_g(\w)=(\g,\w){\bf 1 }_{E_\psi}(\w)$ for $m_{\wh\T}$-a.e. $\w\in\Omega$. Therefore, $Q(Z_\Gamma[g])(\w)=(\g,\w)Z_\Gamma[\psi](\w)=Z_\Gamma[\Pi_g\psi](\w)=\calP Z_\Gamma[\Pi_g\psi](\w)$ for $m_{\wh\T}$-a.e $\w\in\Omega$. Then, since $Q$ and $\calP$ coincide on a dense set of $Z_\Gamma[S_\Gamma(\psi)]$ and both are continuos we must have $Q=\calP$. 
Thus, \eqref{eq:proj-hg} is proven. 

Finally, if $f\in S_\Gamma(\psi)$, $(i)$ follows from \eqref{eq:proj-hg}.

$(ii)$ This is a straightforward consequence of the characterization result \cite[Thm. 4.3]{BHP15} when identifying $\widehat{\Gamma}$ with $\Omega$.
\end{proof}

\begin{proof}[Proof of Theorem \ref{thm:ppal-invariant}]
Let $f\in S_\Gamma(\psi)$. Then, by Lemma \ref{lem:ppal-charcaterization}
we know that there exists a measurable function $\tilde{h}$ defined on $\Omega$ such that 
$Z_\Gamma[f](\w)=\tilde{h}(\w)Z_\Gamma[\psi](\w)$, for $m_{\wh\T}$-a.e. $\w\in\Omega$.

Now, take $h$ to be the $\Gamma^*$-periodization of $\tilde{h}$ defined on the whole $\wh \T$. That is, 
$$h(\w+\g^*):=\tilde{h}(\w)$$
for every $\g^*\in\Gamma^*$ and  $m_{\wh\T}$-a.e. $\w\in\Omega$. Then, by \eqref{eq:Zak-Upsilon} we have 
\begin{align*}
Z_\T[f](\w+\g^*)(x)&=
\sum_{j=0}^sJ_\sigma(-a_j,x)^{\frac1{2}}Z_\Gamma[f](\w)(\sigma_{-a_j}(x))(-a_j,\w+\g^*)\\
&=\sum_{j=0}^sJ_\sigma(-a_j,x)^{\frac1{2}}\tilde{h}(\w)Z_\Gamma[\psi](\w)(\sigma_{-a_j}(x))(-a_j,\w+\g^*)\\
&=\tilde{h}(\w)\sum_{j=0}^sJ_\sigma(-a_j,x)^{\frac1{2}}Z_\Gamma[\psi](\w)(\sigma_{-a_j}(x))(-a_j,\w+\g^*)\\
&=h(\w+\g^*)Z_\T[\psi](\w+\g^*)(x),
\end{align*}
for  $m_{\wh\T}$-a.e. $\w\in\Omega$ and $\mu$-a.e. $x\in C_\T$. This shows what we wanted.
\end{proof}

\section{Extra-invariance of $(\Gamma,\sigma)$-invariant spaces}\label{sec:extra-invariance}
In this section we shall characterize the  extra-invariance of $(\Gamma,\sigma)$-invariant spaces. To be precise,   let us first describe the setting we will work on.

Let $\Gamma\subseteq \Delta\subseteq \T$ be  discrete and countable LCA groups acting on a measure space $(\X,\mu)$, such that 
\begin{enumerate}
\item[$(a)$] $\Gamma$ is a uniform lattice of $\T$,
\item[$(b)$] the action of $\T$ coincides with that of $\Gamma$ (resp. $\Delta$) when restricted to $\Gamma\times \X$ (resp. $\Delta\times \X$).
\end{enumerate}

Our aim is to give necessary and sufficient conditions on a 
$(\Gamma,\sigma)$-invariant space $V\subseteq L^2(\X)$ to be also invariant under $\Delta$, that is, to satisfy $\Pi_\de f\in V$ for every $\de\in\Delta$ and for all $f\in V$. 

Towards this end, we define a partition of $\wh\T$ in $\Delta^*$-periodic measurable sets as follows: as before, let $\Omega\subseteq\wh\T$ be a measurable section for the quotient $\wh\T/\Gamma^*$ and  let $\calN\subseteq \Gamma^*$ be a section for the quotient $\Gamma^*/\Delta^*$
and for $\xi\in\calN$ define 
\begin{equation} \label{los Bs}
B_\xi:=\bigcup_{\delta^*\in\Delta^*}\Omega+\xi+\delta^*= \Omega+\xi+\Delta^*.
\end{equation}
By definition, for every $\xi\in\calN$, $B_\xi$ is measurable and $\Delta^*$-periodic,  meaning that $\B_\xi+\delta^*=B_\xi$ for all $\delta^*\in\Delta^*$.  Since $\Omega$ is a measurable section for $\wh\T/\Gamma^*$ and $\calN$ is a section for 
$\Gamma^*/\Delta^*$ if follows that $\{B_\xi\}_{\xi\in\calN}$ is a partition for 
$\wh\T$.

Now, given a $(\Gamma,\sigma)$-invariant space $V\subseteq L^2(\X)$, we define for every $\xi\in\calN$ the subspace
\begin{equation}\label{eq:subspace-U}
U_\xi:=\{f\in L^2(\X):\, Z_\T [f](\htau)={\bf 1 }_{B_\xi}(\htau)Z_\T [g](\htau), \textrm{ for }m_{\wh\T}\textrm{-a.e. }\htau\in\wh\T, \textrm{ with } g\in V\}.
\end{equation}
Our main result characterizes the $(\Delta, \sigma)$-invariance of $V$ in terms of the subspaces $U_\xi$, $\xi\in\calN$.

\begin{theorem}\label{thm:ppal}
Let $\Gamma\subseteq \Delta\subseteq \T$ be  discrete and countable LCA groups satisfying $(a)$ and $(b)$ and let $V\subseteq L^2(\X)$ be a $(\Gamma,\sigma)$-invariant space. Then, the following are equivalent:
\begin{enumerate}
\item [(i)] $V$ is $(\Delta,\sigma)$-invariant,
\item [(ii)] $U_\xi\subseteq V$ for all $\xi\in\calN$.
\end{enumerate}
Moreover, in case any of the above holds, we have that $V$ is the orthogonal
direct sum 
$$V=\bigoplus_{\xi\in\calN}U_\xi.$$
\end{theorem}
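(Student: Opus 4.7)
The plan is to reduce both directions of the equivalence to Theorem \ref{thm:ppal-invariant}. A key preliminary observation is that $\Delta$ is itself a uniform lattice in $\T$, since $\T/\Delta$ is a quotient of the finite group $\T/\Gamma$; hence Theorem \ref{thm:ppal-invariant} applies equally well with $\Delta$ in place of $\Gamma$, with ``$\Gamma^*$-periodic'' replaced by ``$\Delta^*$-periodic''. Also, $\calN$ is finite, because $\Gamma^*/\Delta^*\cong\wh{\Delta/\Gamma}$ and $\Delta/\Gamma\subseteq\T/\Gamma$ is finite.

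For $(i)\Rightarrow(ii)$, I would take $f\in U_\xi$, so that $Z_\T[f]={\bf 1 }_{B_\xi}Z_\T[g]$ for some $g\in V$. Since ${\bf 1 }_{B_\xi}$ is measurable and, by \eqref{los Bs}, $\Delta^*$-periodic, Theorem \ref{thm:ppal-invariant} applied to $\Delta$ yields $f\in S_\Delta(g)\subseteq V$, the last inclusion being the $(\Delta,\sigma)$-invariance of $V$.

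For the more delicate direction $(ii)\Rightarrow(i)$, fix $f\in V$ and $\de\in\Delta$. I first decompose $f=\sum_{\xi\in\calN}f_\xi$, where $f_\xi$ is the element of $L^2(\X)$ determined by $Z_\T[f_\xi]={\bf 1 }_{B_\xi}Z_\T[f]$; each $f_\xi$ belongs to $U_\xi$ (take $g=f$). It suffices to show $\Pi_\de f_\xi\in U_\xi$ for every $\xi\in\calN$, since then $\Pi_\de f=\sum_\xi\Pi_\de f_\xi\in V$ by hypothesis. Because $Z_\T[\Pi_\de f_\xi]=(\de,\cdot){\bf 1 }_{B_\xi}Z_\T[f]$, the task is to produce $g_\xi\in V$ whose Zak transform coincides with $(\de,\cdot)Z_\T[f]$ on $B_\xi$. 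I build such a $g_\xi$ via Theorem \ref{thm:ppal-invariant} by constructing a bounded, measurable, $\Gamma^*$-periodic multiplier $h$ on $\wh\T$ which agrees with the character $(\de,\cdot)$ on $B_\xi$. Concretely, I set $h(\w):=(\de,\w)(\de,\xi)$ for $\w\in\Omega$ and extend $\Gamma^*$-periodically; a direct computation using $(\de,\de^*)=1$ for $\de^*\in\Delta^*$ shows $h(\htau)=(\de,\htau)$ for every $\htau\in B_\xi$. Theorem \ref{thm:ppal-invariant} then gives $g_\xi\in S_\Gamma(f)\subseteq V$ with $Z_\T[g_\xi]=hZ_\T[f]$, and thus ${\bf 1 }_{B_\xi}Z_\T[g_\xi]=Z_\T[\Pi_\de f_\xi]$, as required.

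For the direct sum decomposition, under either (and hence both) conditions, orthogonality of distinct $U_\xi$'s follows from Parseval for $Z_\T$ together with the disjointness $B_\xi\cap B_{\xi'}=\emptyset$; closedness of each $U_\xi$ follows from the alternative description $U_\xi=\{f\in V:\,Z_\T[f]={\bf 1 }_{B_\xi}Z_\T[f]\}$, which is valid once $U_\xi\subseteq V$; and spanning is the decomposition $f=\sum_\xi f_\xi$ constructed above. The main obstacle is the design of $h$ in $(ii)\Rightarrow(i)$: it must sit at the coarser $\Gamma^*$-level, so that Theorem \ref{thm:ppal-invariant} forces $g_\xi\in V$, yet mimic the $\Delta$-character $(\de,\cdot)$ on the $\Delta^*$-periodic slab $B_\xi$. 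The existence of such an $h$ is precisely the structural content of the partition $\{B_\xi\}_{\xi\in\calN}$.
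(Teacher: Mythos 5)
Your proposal is correct and follows essentially the same route as the paper: the forward direction is the identical application of Theorem \ref{thm:ppal-invariant} with $\Delta$ in place of $\Gamma$, and the key device in the converse --- the bounded $\Gamma^*$-periodic multiplier $h$ defined on $\Omega$ by $h(\w)=(\de,\w+\xi)$ and shown to agree with the character $(\de,\cdot)$ on $B_\xi$ via $(\de,\de^*)=1$ --- is exactly the construction in the paper's Lemma \ref{lem:u-is-invariant}. The only difference is organizational: the paper isolates that construction into a lemma asserting the $\Delta$-invariance of each $U_\xi$ and then invokes $V=\bigoplus_{\xi}U_\xi$, whereas you apply the same argument directly to the components $f_\xi$ of a given $f\in V$; your closedness and orthogonality arguments likewise match the paper's Steps 1--2.
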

We postpone the proof of Theorem \ref{thm:ppal} to Subsection \ref{sec:proofs}. In the next subsection we give an heuristic that lies behind the statement of the above theorem. 

\subsection{Heuristic}\label{sec:heuristic}

In order to  better understand  the statement of Theorem \ref{thm:ppal},
we will give here some  heuristic  that will show how the Zak transform appears in the condition of the theorem. 
To that end, let us review some results for  invariant spaces under translations in LCA groups.
(See \cite{ACP10} for more details).

As in the previous section, let $\Gamma\subseteq \Delta\subseteq \T$ be  discrete and countable LCA groups satisfying $(a)$ and $(b)$.
Let $V\subseteq \ell^2(\T)$ be an invariant subspace by translations along $\Gamma$, i.e. such that 
$t_{\gamma} a \in V$ for every $a \in V \text{ and } \gamma \in \Gamma,$ where $t_\gamma$ is the usual translation operator given by $t_{\gamma}a(\tau)=a(\tau-\gamma)$.
If additionally, $V$ is invariant under  translations along the lattice $\Delta$ we say that $V$ has {\it extra invariance} $\Delta.$
A characterization of this extra invariance  was given in \cite{ACP10} and 
we  briefly  describe it here:
define $B_{\xi}$ as in \eqref{los Bs} and  let $\widetilde{U}_\xi$ be the subspaces of $\ell^2(\T)$ defined as  
\begin{equation}\label{eq:subspace-U-tilde}
\widetilde{U}_\xi:=\{a  \in \ell^2(\T):\, \wh{a} (\htau)={\bf 1 }_{B_\xi}(\htau)\wh{b}(\htau), \textrm{ for }m_{\wh\T}\textrm{-a.e. }\alpha\in\wh\T, \textrm{ with } b\in V\}.
\end{equation}
Here $\wh{a}$ denote the Fourier transform of $a$ for the group $\T$ which is given by 
$\wh{a}(\htau)=\sum_{\tau \in \T} a(\tau) \,(-\tau,\htau)$, for $\htau\in\wh\T$.
The result in  \cite{ACP10} adapted to our case can be stated, using the notation above, as:

\begin{theorem}\label{thm:CPA10}
Let $V$ be an invariant subspace of $\ell^2(\T)$ under translations along $\Gamma$. Then, the  following conditions  are equivalent:
\begin{enumerate}
\item [(i)]  $V$ has extra-invariance $\Delta$;
\item [(ii)] $\widetilde{U}_\xi\subseteq V$ for all $\xi\in\calN$.
\end{enumerate}
\end{theorem}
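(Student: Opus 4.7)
The plan is to prove $(i)\Leftrightarrow(ii)$ and to extract the orthogonal decomposition as a byproduct. The central device is the isometric isomorphism $Z_\T$: for any $f\in L^2(\X)$ set $f_\xi:=Z_\T^{-1}(\mathbf{1}_{B_\xi}Z_\T[f])\in L^2(\X)$. Since $\{B_\xi\}_{\xi\in\calN}$ partitions $\wh\T$, one has $f=\sum_{\xi\in\calN} f_\xi$ with pairwise orthogonal summands (the disjointness of the supports of the $Z_\T[f_\xi]$ is transported back by the isometry). If moreover $f\in V$, each $f_\xi$ lies in $U_\xi$ with generator $g=f$; hence once $U_\xi\subseteq V$ is established, the orthogonal decomposition $V=\bigoplus_{\xi\in\calN}U_\xi$ is automatic.

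For $(i)\Rightarrow(ii)$ I rely on the fact that Theorem \ref{thm:ppal-invariant} applies to any uniform lattice of $\T$: since $\Gamma\subseteq\Delta\subseteq\T$ with $\T/\Gamma$ finite, $\Delta$ is itself a uniform lattice of $\T$, and the analogous principal-space characterization applies with $\Delta$ in place of $\Gamma$. Given $h\in U_\xi$, write $Z_\T[h]=\mathbf{1}_{B_\xi}Z_\T[g]$ with $g\in V$; since $\mathbf{1}_{B_\xi}$ is measurable and $\Delta^*$-periodic by construction, the second part of Theorem \ref{thm:ppal-invariant} (applied to $\Delta$) gives $h\in S_\Delta(g)$. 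But $V$ is $(\Delta,\sigma)$-invariant and contains $g$, so $S_\Delta(g)\subseteq V$ and therefore $h\in V$.

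For $(ii)\Rightarrow(i)$ I fix $f\in V$ and $\delta\in\Delta$; decomposing $f=\sum_\xi f_\xi$ with $f_\xi\in U_\xi\subseteq V$, it suffices to show each $\Pi_\delta f_\xi\in U_\xi$. By Proposition \ref{prop:Zak},
$$Z_\T[\Pi_\delta f_\xi](\htau)=(\delta,\htau)\,\mathbf{1}_{B_\xi}(\htau)\,Z_\T[f](\htau).$$
On $B_\xi$, the unique decomposition $\htau=\omega+\xi+\delta^*$ with $\omega\in\Omega$ and $\delta^*\in\Delta^*$, combined with $(\delta,\delta^*)=1$, yields $(\delta,\htau)=(\delta,\omega)(\delta,\xi)$. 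I then introduce the bounded $\Gamma^*$-periodic function $\phi:\wh\T\to\C$ defined by $\phi(\omega+\gamma^*):=(\delta,\omega)$ for $\omega\in\Omega$, $\gamma^*\in\Gamma^*$; it agrees with $(\delta,\cdot)$ on $\Omega$ and satisfies $(\delta,\htau)=(\delta,\xi)\phi(\htau)$ on $B_\xi$. Consequently $Z_\T[\Pi_\delta f_\xi]=(\delta,\xi)\,\mathbf{1}_{B_\xi}\,\phi\cdot Z_\T[f]$. Since $\phi$ is bounded and $\Gamma^*$-periodic, the second part of Theorem \ref{thm:ppal-invariant} produces $\tilde h\in S_\Gamma(f)\subseteq V$ with $Z_\T[\tilde h]=\phi\cdot Z_\T[f]$; so $Z_\T[\Pi_\delta f_\xi]=\mathbf{1}_{B_\xi}Z_\T[(\delta,\xi)\tilde h]$, placing $\Pi_\delta f_\xi\in U_\xi$. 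The main obstacle is precisely this last argument: $(\delta,\htau)$ is only $\Delta^*$-periodic on all of $\wh\T$, so it cannot be fed into the $\Gamma$-principal-space characterization directly; the trick of trading it on $B_\xi$ for the constant $(\delta,\xi)$ times the $\Gamma^*$-periodic extension $\phi$ of $(\delta,\cdot)\big|_\Omega$ is exactly what allows Theorem \ref{thm:ppal-invariant} to bridge the $\Gamma$-invariant framework to the action of $\Delta$.
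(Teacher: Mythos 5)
Your argument is mathematically sound, but note first that the paper does not actually prove Theorem \ref{thm:CPA10}: it is quoted from \cite{ACP10} purely as motivation in the heuristic subsection, and what you have written is really a proof of the paper's main result, Theorem \ref{thm:ppal} --- the statement about $(\Gamma,\sigma)$-invariant subspaces of $L^2(\X)$, the subspaces $U_\xi$ and the Zak transform $Z_\T$ --- rather than of the literal statement about $V\subseteq\ell^2(\T)$, $\widetilde{U}_\xi$ and the Fourier transform $\wh{a}$. The two are reconciled by taking $\X=\T$ with counting measure and the translation action, in which case $C_\T$ is a single point and $Z_\T[a](\htau)=\wh{a}(-\htau)$, as the paper itself observes; but since your proof invokes Theorem \ref{thm:ppal-invariant}, which lives in the $L^2(\X)$ framework, a sentence making that specialization explicit is needed. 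Compared with the paper's proof of Theorem \ref{thm:ppal}: your $(i)\Rightarrow(ii)$ is identical (apply Theorem \ref{thm:ppal-invariant} with $\Delta$ in place of $\Gamma$, using that $\mathbf{1}_{B_\xi}$ is $\Delta^*$-periodic), and your key device in $(ii)\Rightarrow(i)$ --- trading $(\delta,\cdot)$ on $B_\xi$ for the constant $(\delta,\xi)$ times the $\Gamma^*$-periodic extension of $(\delta,\cdot)\vert_\Omega$ --- is exactly Step 3 of Lemma \ref{lem:u-is-invariant}, whose function $h(\w+\g^*)=(\delta_0,\w+\xi)$ is your $(\delta,\xi)\phi$. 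What you do differently is organizational: instead of first proving that each $U_\xi$ is closed and $(\Delta,\sigma)$-invariant (Steps 1--2 of that lemma) and then assembling $V=\bigoplus_{\xi\in\calN}U_\xi$ via orthogonal projections, you decompose each $f\in V$ directly as $\sum_\xi f_\xi$ with $f_\xi\in U_\xi$ and push $\Pi_\delta$ through the sum. This buys a shorter argument that never needs the closedness of $U_\xi$, and your final step is in fact cleaner than the paper's: the paper concludes $\Pi_{\delta_0}f\in S_\Gamma(g)\subseteq U_\xi$, where the factor $\mathbf{1}_{B_\xi}h$ multiplying $Z_\T[g]$ is only $\Delta^*$-periodic, so one must first absorb $\mathbf{1}_{B_\xi}$ into $Z_\T[f]$ before Theorem \ref{thm:ppal-invariant} applies; you sidestep this by exhibiting the explicit generator $(\delta,\xi)\tilde h\in V$ and appealing directly to the definition of $U_\xi$.
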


Essentially, Theorem \ref{thm:CPA10} says that a invariant subspace under translations in $\Gamma$ has extra-invariance if the Fourier transform of each function in $V$ can be cut off with the $B_{\xi}$ sets and still remains in the space $V$.

In trying to reproduce this result for subspaces invariant by the  action of $\Gamma$ on an abstract measure space $(\X,\mu)$
we found the problem that, because of the lack of the group structure, we can not define the Fourier transform on $\X$ .
However there are a way to surround this obstacle, as we will show next.

We start by decomposing  the space $\X$ in  orbits by the group action. For this, 
define the following relation in $\X$: for $x,y\in\X$  we say that $x$ is equivalent to 
$y$, $x \sim  y$, if there exist $\tau \in \T$ such that $\sigma_{\tau}(x)  = y$.
The properties of the action guarantee that $\sim$ is an equivalence relation.
If $C_\T \subseteq \X$ is the tilling set associated to $\sigma$, then $C_{\T}$ is a section of the quotient $\X/{\sim},$ the {\it orbit space}.

 Now  consider the following map,
$$ 
\Phi: L^2(\X) \longrightarrow  L^2(C_\T,\ell^2(\T)),  \qquad
f \longmapsto \Phi(f)(x)=\{J(\tau,x)^{\frac1{2}}f(\sigma_\tau(x))\}_{\tau \in \T}, \quad x \in C_\T.
$$
It is easily  seen that $\Phi$ is an isometric isomorphism (see e.g. the proof of \cite[Lemma 3.1]{BHP15}).

Let $V=S_{\Gamma}(\A)$ be a $(\Gamma,\sigma)$-invariant subspace of $L^2(\X)$ generated by a countable set $\A \subseteq L^2(\X)$. 
Define $\kJ,$ the {\it range function} associated to $V$, by
$\kJ:C_{\T}\longrightarrow \{\text{closed subspaces of } \ell^2(\T)\}$ 
$$ \kJ(x):=\overline{\mbox{span} }\{\Phi(\varphi)(x) :\;\varphi\in\A\} =
\overline{\mbox{span} }\{\{J(\tau,x)^{\frac1{2}}\varphi(\sigma_{\tau}(x))\}_{\tau \in \T}: \varphi\in\A\}.
$$

From the theory of range functions \cite{Hel64, Bow00} we conclude that $\kJ$ is a measurable range function and that
$$ f\in V  \Longrightarrow \Phi(f)(x) \in \kJ(x) \;\mu\text{ -a.e. }\,x\in C_{\T}.$$

On the other hand, the $(\Gamma,\sigma)$-invariance of $V$ forces $\kJ(x)$ to be invariant under translations along $\Gamma$ for almost all $x\in C_{\T}$. To see this, note that for $f\in L^2(\X)$ and $\gamma\in\Gamma$ we have that 
\begin{align*}
\Phi(\Pi_\gamma f)(x)&=\{J(\tau,x)^{\frac1{2}}\Pi_\gamma f(\sigma_\tau(x))\}_{\tau\in\T}\\
&=\{J(\tau,x)^{\frac1{2}}J(-\gamma,\sigma_\tau(x))^{\frac1{2}}f(\sigma_{\tau-\gamma}(x))\}_{\tau\in\T}\\
&=\{J(\tau-\gamma,x)^{\frac1{2}}f(\sigma_{\tau-\gamma}(x)\}_{\tau\in\T}=t_\gamma(\Phi(f)(x)),
\end{align*}
where we have used the properties of the Jacobian (see e.g. \cite[Subsection 2.2]{ BHP15}). 
Moreover, when $V$ is, additionally,  a $(\Delta,\sigma)$-invariant space, $\kJ(x)$ has extra invariance $\Delta$  for almost all $x\in C_{\T}$. 
The good news  is that the subspaces $\kJ(x)$ are subspaces of $\ell^2(\T)$ i.e. subspaces of functions defined 
in a group, so we can use the Fourier transform and apply Theorem \ref{thm:CPA10}.  With this purpose, for almost $\mu$-a.e.  $x \in C_{\T}$,  we define, associated to each $\kJ(x)$, the  subspaces defined by  \eqref{eq:subspace-U-tilde} that read
\begin{equation*}
\widetilde{U}_\xi(x):=\{a\in \ell^2(\T):\, \wh{a}(\htau)={\bf 1 }_{B_\xi}(\htau)\wh{b}(\htau), \textrm{ for }m_{\wh\T}\textrm{-a.e. }\htau\in\wh\T, \textrm{ with } b\in \kJ(x)\}.
\end{equation*}
Then,  Theorem \ref{thm:CPA10}  applied to $\kJ(x)$ can be stated, using the notation above, as:

\begin{theorem}\label{thm:ppal2}
For  $\mu$-a.e. $x \in C_{\T}$  the following conditions are equivalent:
\begin{enumerate}
\item [(i)] The invariant space $\kJ(x)$ along translations in $\Gamma$ is also invariant under translations in $\Delta$;
\item [(ii)] $\widetilde{U}_\xi(x)\subseteq \kJ(x)$ for all $\xi\in\calN$.
\end{enumerate}
\end{theorem}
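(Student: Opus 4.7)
The plan is to apply Theorem \ref{thm:CPA10} pointwise, to each fiber $\kJ(x)$ of the range function. The only real work is to verify that for $\mu$-a.e.\ $x\in C_\T$ the space $\kJ(x)$ is genuinely a closed $\Gamma$-translation-invariant subspace of $\ell^2(\T)$, so that Theorem \ref{thm:CPA10} is applicable. This invariance is already implicit in the computation displayed just before the statement: the isometry $\Phi$ intertwines the action, in the sense that $\Phi(\Pi_\gamma f)(x) = t_\gamma\bigl(\Phi(f)(x)\bigr)$ for every $\gamma\in\Gamma$ and $\mu$-a.e.\ $x\in C_\T$. Since $\Gamma$ is countable, one can choose a single full-measure set $C_\T^0\subseteq C_\T$ on which this identity holds simultaneously for all $\gamma\in\Gamma$ and all $f$ in a fixed countable generating set $\A$ for $V$.

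Fixing any $x\in C_\T^0$, the subspace $\kJ(x)=\overline{\mbox{span}}\{\Phi(\varphi)(x):\varphi\in\A\}$ is by construction closed in $\ell^2(\T)$, and closed under each $t_\gamma$ for $\gamma\in\Gamma$ by the intertwining identity. Hence Theorem \ref{thm:CPA10} applies with $V$ replaced by $\kJ(x)$: the extra-invariance of $\kJ(x)$ under $\Delta$-translations is equivalent to the inclusions $\widetilde{U}_\xi(x)\subseteq\kJ(x)$ for every $\xi\in\calN$. This gives the equivalence (i) $\Leftrightarrow$ (ii) at every $x\in C_\T^0$, which is the desired almost-everywhere statement.

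The only point requiring care is the simultaneous choice of the exceptional null set. This is painless here because $\calN$ is a section of $\Gamma^*/\Delta^*$, which is dual to $\Delta/\Gamma$; since $\Gamma$ is a uniform lattice of $\T$ and $\Delta\subseteq\T$, the quotient $\Delta/\Gamma$ is finite, so $\calN$ is finite as well. Consequently condition (ii) is a finite list of inclusions, and the previous full-measure set $C_\T^0$ can be shrunk by a countable union of null sets to ensure all the relevant identifications hold there. I do not expect any real obstacle: the proof is essentially bookkeeping around the fact that $\Phi$ is a fiberwise isometric isomorphism and that Theorem \ref{thm:CPA10}, which lives in $\ell^2(\T)$, can be invoked verbatim for each fiber.
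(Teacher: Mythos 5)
Your proposal is correct and matches the paper's intent exactly: the paper states Theorem \ref{thm:ppal2} as a direct fiberwise application of Theorem \ref{thm:CPA10} to $\kJ(x)$, justified by the intertwining identity $\Phi(\Pi_\gamma f)(x)=t_\gamma(\Phi(f)(x))$ established just before the statement, and gives no further proof. Your added bookkeeping (choosing a single full-measure set over the countable data $\A$, $\Gamma$, and the finite $\calN$) is the only detail the paper leaves implicit, and you handle it correctly.
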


Now given $f \in L^2(\X)$ and $x \in C_{\T}$ we note that
$$\wh{\Phi(f)(x)}(\htau)=\sum_{\tau \in \T} \Phi(f)(x)(\tau) \,(-\tau,\htau) = Z_\T [f](-\htau)(x)$$
which motives the definition of the cut off sets given in \eqref {eq:subspace-U} and the statement of Theorem \ref{thm:ppal}.

\subsection{Proof of Theorem \ref{thm:ppal}}\label{sec:proofs}
The proof of Theorem \ref{thm:ppal} will not follow the ideas of the previous section. Instead, we will follow that lines of the proofs of \cite[Theorem 5.1]{ACP11}. See also \cite{ACP10, ACHKM10}.
We need a fundamental intermediate lemma. 

\begin{lemma}\label{lem:u-is-invariant}
Let $V\subseteq L^2(\X)$ and $\xi\in\calN$. Assume that the subspaces $U_\xi$ defined in \eqref{eq:subspace-U} satisfied $U_\xi\subseteq V$. Then, $U_\xi$ is $(\Gamma,\sigma)$-invariant.  
\end{lemma}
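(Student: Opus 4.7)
The plan is to recognize $U_\xi$ as the intersection of $V$ with a closed subspace built from the Zak transform, and to exploit that this auxiliary subspace is already $(\Gamma,\sigma)$-invariant. Using the isometric isomorphism $Z_\T\colon L^2(\X)\to L^2(\wh\T,L^2(C_\T))$ of Proposition \ref{prop:Zak}, I would introduce
\[
L^2_{B_\xi}(\X):=\{f\in L^2(\X):\, Z_\T[f](\htau)=0 \ \textrm{for}\ m_{\wh\T}\textrm{-a.e.}\ \htau\notin B_\xi\},
\]
together with the orthogonal projection $P_\xi$ of $L^2(\X)$ onto it, whose action on the Zak side is simply $Z_\T[P_\xi f](\htau)={\bf 1 }_{B_\xi}(\htau)Z_\T[f](\htau)$. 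With this notation the definition \eqref{eq:subspace-U} rewrites as the concise identity $U_\xi=P_\xi(V)$.

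The first key step is the commutation $P_\xi\Pi_\g=\Pi_\g P_\xi$ for every $\g\in\Gamma$. This follows from Proposition \ref{prop:Zak} applied to $\T$ (valid for $\g\in\Gamma\subseteq\T$), which gives $Z_\T[\Pi_\g f](\htau)=(\g,\htau)Z_\T[f](\htau)$; since multiplication by the scalar character $(\g,\htau)$ commutes with multiplication by the scalar indicator ${\bf 1 }_{B_\xi}(\htau)$, the commutation is immediate. In particular, $L^2_{B_\xi}(\X)$ is itself closed and $(\Gamma,\sigma)$-invariant.

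Next, I would use the hypothesis $U_\xi\subseteq V$ to upgrade $U_\xi=P_\xi(V)$ into the intersection identity $U_\xi=V\cap L^2_{B_\xi}(\X)$. The inclusion $U_\xi\subseteq V\cap L^2_{B_\xi}(\X)$ follows from the hypothesis together with $P_\xi(V)\subseteq L^2_{B_\xi}(\X)$; conversely, if $f\in V\cap L^2_{B_\xi}(\X)$, then $f=P_\xi f\in P_\xi(V)=U_\xi$. As an intersection of two closed $(\Gamma,\sigma)$-invariant subspaces, $U_\xi$ is then closed and $(\Gamma,\sigma)$-invariant, as required.

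The main obstacle that the hypothesis $U_\xi\subseteq V$ resolves is precisely the closedness of $U_\xi$: in general the image $P_\xi(V)$ of a closed subspace under a bounded projection need not be closed, so the raw definition \eqref{eq:subspace-U} does not a priori produce a closed subspace. The hypothesis rewrites $U_\xi$ as the intersection of two closed subspaces, sidestepping this issue; the $\Gamma$-invariance is then automatic from the commutation $P_\xi\Pi_\g=\Pi_\g P_\xi$ together with the $(\Gamma,\sigma)$-invariance of $V$.
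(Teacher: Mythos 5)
Your proof is correct, and at bottom it runs on the same two facts as the paper's: that $Z_\T$ intertwines $\Pi_\g$ with multiplication by the character $(\g,\cdot)$, which commutes with multiplication by ${\bf 1}_{B_\xi}$, and that the hypothesis $U_\xi\subseteq V$ is what makes $U_\xi$ closed. The packaging differs: you realize $U_\xi=P_\xi(V)$ and upgrade it to the intersection $V\cap L^2_{B_\xi}(\X)$ of two closed $(\Gamma,\sigma)$-invariant subspaces, whereas the paper proves closedness by a convergent-sequence argument showing the limit $f$ lies in $V$ and has $Z_\T[f]$ supported in $B_\xi$ (which is your intersection identity in disguise), and then checks $\Gamma$-invariance by the same character/indicator computation written out on the Zak side. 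Your route is shorter and makes the role of the hypothesis $U_\xi\subseteq V$ more transparent. One point worth flagging: the paper's proof of this lemma contains a third step proving the strictly stronger conclusion that $U_\xi$ is invariant under all of $\Delta$ (by constructing a $\Gamma^*$-periodic $h$ agreeing with $(\delta_0,\cdot)$ on $B_\xi$ and invoking Theorem \ref{thm:ppal-invariant}), and it is that $(\Delta,\sigma)$-invariance which the last line of the proof of Theorem \ref{thm:ppal} actually cites. Your argument completely proves the lemma as stated, but if it were substituted for the paper's proof, the $\Delta$-invariance of $U_\xi$ would still have to be supplied separately before Theorem \ref{thm:ppal} could conclude.
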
 

\begin{proof}
We split the proof into three parts.

\underline{Step 1:} Let us show that $U_\xi$ is closed. Take $\{f_n\}_{n\in\N}\subseteq U_\xi$ be a sequence of functions converging to $f\in L^2(\X)$. Since $U_\xi\subseteq V$ and $V$ is closed, we have that $f\in V$. Note that, since for any $g\in U_\xi$, we have that  $Z_\T [g](\htau)={\bf 1 }_{B_\xi}(\htau)Z_\T [g](\htau), \textrm{ for }m_{\wh\T}\textrm{-a.e. }\htau\in\wh\T$ we obtain for $n\in\N$, 
\begin{align*}
\|f-f_n\|^2&=\|Z_\T[f]-Z_\T[f_n]\|^2_{L^2(\wh\T,L^2(C_\T))}
=\int_{\wh\T}\|Z_\T[f](\htau)-Z_\T[f_n](\htau)\|^2_{L^2(C_\T)}\,dm_{\wh\T}(\htau)\\
&=\int_{\wh\T}\|Z_\T[f](\htau)-{\bf 1}_{B_\xi}(\htau)Z_\T[f_n](\htau)\|^2_{L^2(C_\T)}\,dm_{\wh\T}(\htau)\\
&=\int_{B_\xi}\|Z_\T[f](\htau)-Z_\T[f_n](\htau)\|^2_{L^2(C_\T)}\,dm_{\wh\T}(\htau)
+\int_{B_\xi^c}\|Z_\T[f](\htau)\|^2_{L^2(C_\T)}\,dm_{\wh\T}(\htau).
\end{align*}
Since $f_n\to f$ in $L^2(\X)$, we conclude from the above equality that 
$Z_\T[f](\htau)=0$ for $m_{\wh\T}$-a.e. $\htau\in B_\xi^c$ and then, 
we must have $Z_\T [f](\htau)={\bf 1 }_{B_\xi}(\htau)Z_\T [f](\htau)$
for $m_{\wh\T}$-a.e. $\htau\in \wh\T$. Therefore, using that $f\in V$, it follows that $f\in U_\xi$ proving that $U_\xi$ is closed.

\underline{Step 2:} Note that $U_\xi$ is $(\Gamma,\sigma)$-invariant. Indeed, 
for $f\in U_\xi$ and $\g\in\Gamma$ we have that 
$$Z_\T [\Pi_\g f](\htau)=(\g,\htau)Z_\T [f](\htau)={\bf 1 }_{B_\xi}(\htau)(\g,\htau)Z_\T [g](\htau)={\bf 1 }_{B_\xi}(\htau)Z_\T [\Pi_\g g](\htau)$$ 
 where $g\in V$ is such that $Z_\T [f](\htau)={\bf 1 }_{B_\xi}(\htau)Z_\T [g](\htau)$ for $m_{\wh\T}$-a.e. $\htau\in \wh\T$. Since $V$ is $(\Gamma,\sigma)$-invariant, $\Pi_\g g\in V$ and by construction of $U_\xi$, $\Pi_\g f\in U_\xi$.

\underline{Step 3:}
Given $f\in U_\xi$ and $\delta_0\in\Delta$ we shall prove now that $\Pi_{\delta_0} f\in U_\xi$. This part is not as direct as the $(\Gamma,\sigma)$-invariance of $U_\xi$ and it requires more work. 

 Let $g\in V$ be such that $Z_\T [f](\htau)={\bf 1 }_{B_\xi}(\htau)Z_\T [g](\htau)$
for $m_{\wh\T}$-a.e. $\htau\in \wh\T$. Then, 
$$Z_\T [\Pi_{\delta_0} f](\htau)=(\delta_0,\htau)Z_\T [f](\htau)={\bf 1 }_{B_\xi}(\htau)(\delta_0,\htau)Z_\T [g](\htau)$$ 
for $m_{\wh\T}$-a.e. $\htau\in \wh\T$. Although we have that $(\delta_0,\htau)Z_\T [g](\htau)=Z_\T [\Pi_{\delta_0}g](\htau)$, we can not say by now that $\Pi_{\delta_0}g$ belongs to $V$ to deduce that $\Pi_{\delta_0}f\in U_\xi$. However, suppose that we can find  a $\Gamma^*$-periodic function $h$ defined on $\wh\T$ such that
$$h(\htau)=(\delta_0,\htau)\,\,\textrm{ for }m_{\wh\T}\textrm {-a.e. } \htau\in B_\xi.$$
Then, we would have  that
$$Z_\T [\Pi_{\delta_0} f](\htau)={\bf 1 }_{B_\xi}(\htau)h(\htau)Z_\T [g](\htau)$$ 
for $m_{\wh\T}$-a.e. $\htau\in \wh\T$ and, by Theorem \ref{thm:ppal-invariant}, $\Pi_{\delta_0} f\in S_\Gamma(g)\subseteq U_\xi$ because, as we already saw, $U_\xi$ is $(\Gamma,\sigma)$-invariant. 

Therefore, it is sufficient find such a $\Gamma^*$-periodic function $h$ and this is what we do next. 
For every $\delta^*\in\Delta^*$ we know that 
$$(\delta_0,\w+\xi)=(\delta_0,\w+\xi +\delta^*)\,\,\textrm{ for }m_{\wh\T}\textrm {-a.e. } \w\in\Omega.$$
We define 
$$h(\w+\g^*):=(\delta_0,\w+\xi)\,\,\textrm{ for }m_{\wh\T}\textrm {-a.e. } \w\in\Omega\,\,
\textrm{ and }\forall\,\,\g^*\in\Gamma^*.$$
By definition, $h$ is $\Gamma^*$-periodic. To see that  $h=(\delta_0, \cdot)$ on $\B_\xi$ take $\htau=\w+\xi+\delta^*\in B_\xi$. Then, 
$$h(\htau)=h(\w)=(\delta_0, \w+\xi)=(\delta_0,\w+\xi +\delta^*)=(\delta_0,\htau),$$
as we wanted. This conclude the proof.
\end{proof}
 
 \begin{proof}[Proof of Theorem \ref{thm:ppal}]
 $(i)\Rightarrow(ii)$.
Let $\xi\in\calN$ be fixed and take $f\in U_\xi$. Then, there exists $g\in V$ such that  
$Z_\T [f](\htau)={\bf 1 }_{B_\xi}(\htau)Z_\T [g](\htau)$
for $m_{\wh\T}$-a.e. $\htau\in \wh\T$. Since ${\bf 1 }_{B_\xi}$ is a $\Delta^*$-periodic function,  Theorem \ref{thm:ppal-invariant} but applied to $\Delta$ instead of $\Gamma$ gives that $f\in S_\Delta(g)$. Using that $V$ is $(\Delta, \sigma)$-invariant, we obtain $S_\Delta(g)\subseteq V$. Thus $(ii)$ is proven.

 $(ii)\Rightarrow(i)$.
 Since $\{B_\xi\}_{\xi\in\calN}$ is a partition of $\wh \T$,  the subspaces $\{U_\xi\}_{\xi\in\calN}$ are mutually orthogonal. 
 Then $(ii)$ gives that $\bigoplus_{\xi\in\calN}U_\xi\subseteq V$.
 
On the other hand, for every $\xi\in\calN$ call  $P_{U_\xi}$ the orthogonal projection onto $U_\xi$. Thus, using again that  $\{U_\xi\}_{\xi\in\calN}$ are mutually orthogonal, we have that for every $f\in V$, $f=\sum_{\xi\in\calN} P_{U_\xi}f$. Therefore, $V\subseteq\bigoplus_{\xi\in\calN}U_\xi$ and then, 
 $$V=\bigoplus_{\xi\in\calN}U_\xi.$$
Finally, since by  Lemma \ref{lem:u-is-invariant}, $U_\xi$ is a $(\Delta, \sigma)$-invariant space for every $\xi\in\calN$, so is $V$.
 \end{proof}
 
\begin{remark}
Observe that given any $\Gamma\subseteq \Delta\subseteq \T$ satisfying $(a)$ and $(b)$ there is always a $(\Gamma, \sigma)$-invariant space of $L^2(\X)$ that is also $(\Delta, \sigma)$-invariant.
 For seeing this, assume that $e\in\wh \T$, the neutral element of $\wh \T$, belongs to $\calN$ and take  $\varphi\in L^2(\X)$ such that $Z_\T[\varphi]=\chi_{B_e}$. 
 Set $V=S_\Gamma(\varphi)$ and let us prove that $V$ is 
 $(\Delta, \sigma)$-invariant. Note that, by Theorem \ref{thm:ppal-invariant}, for every  $g\in V$ there there exists a measurable $\Gamma^*$-periodic function $h$ defined on $\wh\T$ such that 
$Z_\T[f]=hZ_\T[\varphi].$
Therefore, for $\xi\in\calN$ 
\begin{align*}U_\xi=&\{f\in L^2(\X):\, Z_\T [f]={\bf 1 }_{B_\xi}h Z_\T [\varphi],\textrm{ with } h  \textrm{ a } \Gamma^*-\textrm{ periodic function} \}\\
=&\{f\in L^2(\X):\, Z_\T [f]={\bf 1 }_{B_\xi}h{\bf 1 }_{B_e},\textrm{ with } h  \textrm{ a } \Gamma^*-\textrm{ periodic function} \}
\end{align*}
and it is easily seen that $U_\xi=\{0\}$ when $\xi\neq e$ and $U_e=V$. Then, by Theorem 
\ref{thm:ppal}, we conclude that $V$ is $(\Delta, \sigma)$-invariant. 
\end{remark}

\begin{example}\noindent

{\bf $\bullet$ $\Z$ acts in $\R^2$ by multiplication with the shear matrix.}

Let us consider the group $\Gamma=\Z$ acting on $\X=\R^2$ by
  $$\sigma(k, (x,y))=\begin{pmatrix}
                        1&0\\
                        k&1
                    \end{pmatrix}\begin{pmatrix}
                      x\\
                      y
                    \end{pmatrix}=\begin{pmatrix}
                      x\\
                      kx+y
                    \end{pmatrix}.$$ 
Take $\T=\frac1{6}\Z$ acting on $\X=\R^2$ by
$$\sigma^\T(\frac1{6}j, (x,y))=\begin{pmatrix}
                        1&0\\
                        \frac1{6}j&1
                    \end{pmatrix}\begin{pmatrix}
                      x\\
                      y
                    \end{pmatrix}=\begin{pmatrix}
                      x\\
                      \frac1{6}jx+y
                    \end{pmatrix},$$ 
which is an action that clearly extends $\sigma$. Then, $J(k,(x,y))=1$ for all $k\in\Z$ and $(x,y)\in\R^2$ and also 
$J_\T(\frac1{6}j,(x,y))=1$ for all $j\in\Z$ and $(x,y)\in\R^2$. Moreover, $\sigma_\T$ has the tiling property with tiling  set $C_\T=\{(x,y)\in\R^2:\, 0\leq y\leq \frac1{6}x\}\cup\{(x,y)\in\R^2:\, \frac1{6}x\leq y\leq 0\}$. Since $\F=\{0, -\frac1{6},\cdots,-\frac{5}{6}\}$ is a section for the quotient $\T/\Gamma$, by  \eqref{eq:tiling-set} we then have that $ C_\Gamma=\bigcup_{j=0}^5\sigma^\T_{-\frac1{6}j}(C_\T)=\{(x,y)\in\R^2:\, 0\leq y\leq x\}\cup\{(x,y)\in\R^2:\, x\leq y\leq 0\}$. In Figure 1 the tiling set $C_\T$ is  represented in light gray. According the gray of the regions is getting darker, we have illustrated the sets $\sigma^\T_{-\frac1{6}j}(C_\T)$
for $1\leq j\leq5$. 

If we now consider the group $\Delta=\frac1{2}\Z$ and we take $V\subseteq L^2(\R^2)$ a $(\Gamma, \sigma)$-invariant space that is also $(\Delta, \sigma)$-invariant, then the subspaces $U_0$ and $U_1$ given by \eqref{eq:subspace-U} are 
$$U_i:=\{f\in L^2(\R^2):\, Z_\T [f]={\bf 1 }_{B_i}Z_\T [g],\textrm{ with } g\in V\}, \quad\textrm{for }\quad i=0,1,$$
where $B_0=[0,1)\cup[2,3)\cup[4,5)$ and $B_1=[1,2)\cup[3,4)\cup[5,6)$.

\begin{figure}[h]
\includegraphics[scale=0.5]{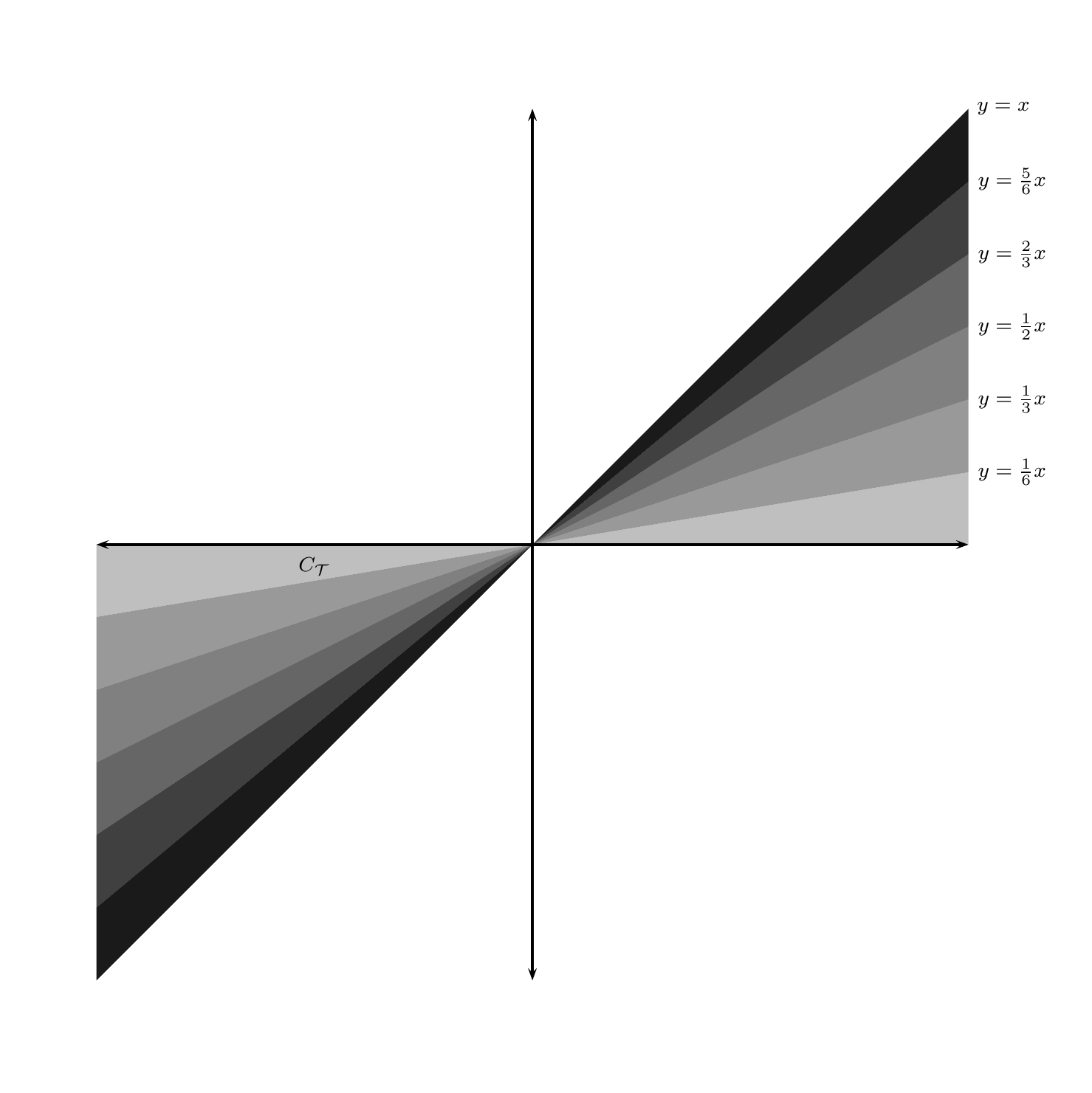}
\centering 
\vspace*{-0.5cm}
\caption{\small Tiling set for the action by the shear matrix.}
\end{figure}

{\bf $\bullet$ $\Z$ acts in $\R^2$ by dilations.}
Suppose now that  the group $\Gamma=\Z$ acts on $\X=\R^2$ by
  $$\sigma(k, (x,y))=\begin{pmatrix}
                        2^k&0\\
                        0&2^{\frac{k}{2}}
                    \end{pmatrix}\begin{pmatrix}
                      x\\
                      y
                    \end{pmatrix}=\begin{pmatrix}
                      2^kx\\
                      2^{\frac{k}{2}}y
                    \end{pmatrix}.$$ 
Take $\T=\frac1{4}\Z$ acting on $\X=\R^2$ as a natural  extension of  $\sigma$. That is, 
$$\sigma^\T(\frac1{4}j, (x,y))=\begin{pmatrix}
                        2^{\frac{j}{4}}&0\\
                        0&2^{\frac{j}{8}}

                    \end{pmatrix}\begin{pmatrix}
                      x\\
                      y
                    \end{pmatrix}=\begin{pmatrix}
                      2^{\frac{j}{4}}x\\
                      2^{\frac{j}{8}}y
                    \end{pmatrix},$$ 
The Jacobians in this case are $J(k,(x,y))=2^{\frac{3k}{2}}$ and $J_\T(\frac1{4}j,(x,y))=2^{\frac{3k}{8}}$ for all $k, j\in\Z$ and $(x,y)\in\R^2$.
The tiling set associated to $\sigma_\T$ is 
$$C_\T=[-2^{-\frac{3}{4}}, 2^{-\frac{3}{4}}]\times [-2^{-\frac{3}{8}}, 2^{-\frac{3}{8}}]\setminus[-\frac1{2}, \frac1{2}]\times[-2^{-\frac{1}{2}},2^{-\frac{1}{2}}].$$
Note that since $\F=\{0, -\frac1{4},-\frac1{2},-\frac{3}{4}\}$, by \eqref{eq:tiling-set},
we that have that the tiling set associated to $\sigma$ is 
$C_\Gamma=\sigma^\T_{0}(C_\T)\cup\sigma^\T_{-\frac1{4}}(C_\T)\cup\sigma^\T_{-\frac1{2}}(C_\T)\cup\sigma^\T_{-\frac{3}{4}}(C_\T)$ which coincides  with$[-1,1]^2\setminus[-\frac1{2}, \frac1{2}]\times[-2^{-\frac{1}{2}},2^{-\frac{1}{2}}]$. This is illustrated in Figure 2 where $x_1=2^{-\frac{3}{4}}$, $x_2=2^{-\frac{1}{2}}$,  $x_3=2^{-\frac{1}{4}}$ and $y_1=2^{-\frac{1}{2}}$, $y_2=2^{-\frac{3}{8}}$, $y_3=2^{-\frac{1}{4}}$, $y_4=2^{-\frac{1}{8}}$. The tiling set $C_\T$ is represented by the frame painted in  lightgray and the other gray regions represent the dilations of $C_\T$ by $\frac1{4}$, $\frac1{2}$ and $\frac{3}{4}$ respectively. 

Now, for  $\Delta=\frac1{2}\Z$, we have that the  partition $\{B_\xi\}_{\xi=0,1}$ of $\widehat{\T}=[ 0,4)$ is given by 
$B_0=[0,1)\cup[2,3)$ and $[1,2)\cup[3,4).$
Thus, if  $V\subseteq L^2(\R^2)$ is a $(\Gamma, \sigma)$-invariant space that is also $(\Delta, \sigma)$-invariant, $U_i:=\{f\in L^2(\R^2):\, Z_\T [f]={\bf 1 }_{B_i}Z_\T [g],\textrm{ with } g\in V\}$ for $i=0,1$.
\end{example}

\begin{figure}[h]
\includegraphics[scale=0.7]{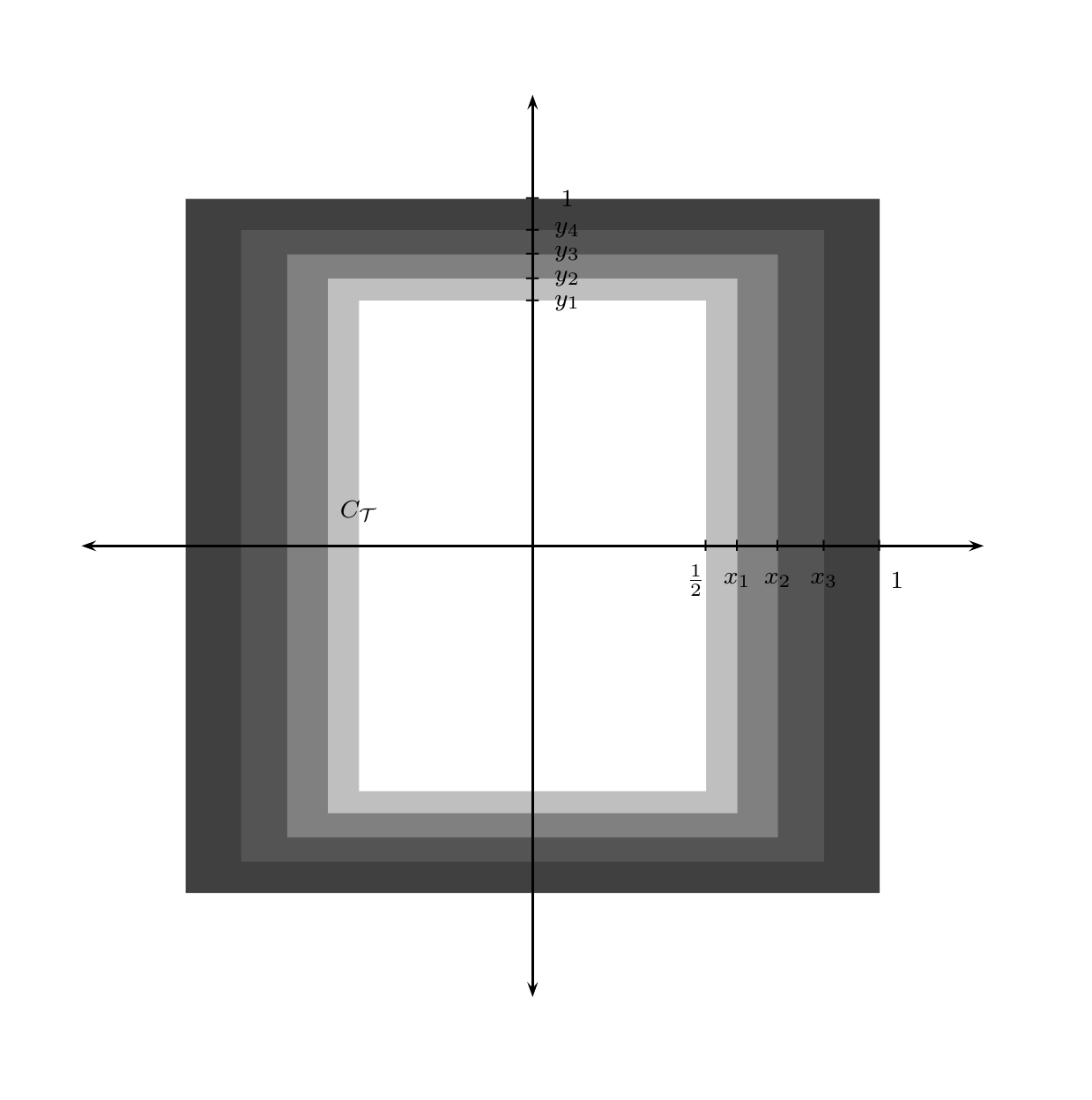}
\centering 
\vspace*{0.3cm}
\caption{\small Tiling set for the action by dilations.}
\end{figure}

\section{$(\Gamma, \sigma)$-invariant spaces and decomposable MI spaces.}\label{sec:decomposable}
 
In this section we want to connect $(\Gamma, \sigma)$-invariant spaces which are also $(\Delta, \sigma)$-invariant with decomposable multiplicatively invariant spaces. For this, we start be recalling the basics of multiplicatively invariant spaces and their connection with invariant spaces. Then we will prove a characterization of extra invariance of $(\Gamma, \sigma)$-invariant spaces in terms of decomposable multiplicatively invariant spaces. Additionally we will solve an approximation problem of finding the nearest $(\Gamma, \sigma)$-invariant space to a given set of data. 

\subsection{Characterization of extra invariance in terms of decomposable MI spaces}

Recall that given a separable Hilbert space $\calH$ and a measure space $(\mathcal{Y},\nu)$ a multiplicatively invariant  space $M$ (MI-space for short)  is a  closed subspace of  $L^2(\mathcal{Y}, \mathcal{H})$  which is invariant under multiplications of functions in a determining set $D\subseteq L^{\infty}(\mathcal{Y})$, where $D$ is a {\it determining set for $L^1(\mathcal{Y})$} if for every $f\in L^1(\mathcal{Y})$ such that $\int_{\mathcal{Y}} fg\, d\nu=0$ for all $g\in D,$ one has $f=0.$
For details in the subject see \cite[Section 2]{BR14} as well as \cite{Hel64, Sri64, HS64}.

For the notion of decomposable MI-spaces, introduced in \cite{CMP17}, fix  $D$  a determining set for $L^1(\mathcal{Y})$ and for a given $\kappa\in\N\cup\{+\infty\}$,  suppose that a Hilbert space $\calH$ can be decomposed into an  orthogonal sum as 
\begin{equation}\label{eq:descomposicion}
\mathcal{H}=\mathcal{H}_1\oplus\cdots\oplus \mathcal{H}_{\kappa}.
\end{equation}
For each MI-subspace $M,$ we define 
\[
M_j:= \{\calP_j(\Phi)\colon \Phi\in M\}, \mbox{ for all } 1\le j\le \kappa,
\]
where $\calP_j(\Phi)$ denotes the function in $L^2(\mathcal{Y}, \mathcal{H})$ defined as $\calP_j(\Phi)(y):=P_{\calH_j}(\Phi(y))$  with  $P_{\mathcal{H}_j}\colon \mathcal{H}\to\mathcal{H}_j$ being  the orthogonal projection of $\calH$ onto $\calH_j$. Note that $\calP_j$ is the orthogonal projection of $L^2(\mathcal{Y}, \mathcal{H})$ into $L^2(\mathcal{Y}, \mathcal{H}_j)$, for all $j=1,\dots,\kappa$.

\begin{definition}\label{def:decomposable-MI}
 We say that a MI-subspace $M\subseteq L^2(\mathcal{Y}, \mathcal{H})$ is {\it decomposable} with respect to  $\{\calH_1, \ldots, \calH_\kappa\}$ if 
\[
M_j\subseteq M \quad \mbox{ for all } 1\le j\le\kappa. 
\]
\end{definition}

To establish the connection between decomposable MI-spaces and  $(\Gamma, \sigma)$-invariant spaces that are also $(\Delta, \sigma)$-invariant suppose that 
 $\Gamma\subseteq \Delta\subseteq \T$ are  discrete and countable LCA groups acting on a measure space $(\X,\mu)$ such that they satisfy properties $(a)$ and $(b)$ of Section  \ref{sec:extra-invariance} and keep the notation stated in that section.

Observe that,  by Proposition \ref{prop:Zak}, $(\Gamma, \sigma)$-invariant spaces in $L^2(\X)$ are in correspondence with MI-spaces in $L^2(\W, L^2(C_\Gamma))$ under the Zak transform $Z_\Gamma$ where the underlying determinig set $D\subseteq L^\infty(\W)$ is $D=\{e_\g\}_{\g\in\Gamma}$, with $e_\gamma(\w):=(\gamma,\w)$ for $\w\in \W$. 
In what follows we need to consider a slightly different version of $Z_\Gamma$ which can be seen as a Fourier transform of it. More precisely, let $Z:L^2(\X) \longrightarrow L^2(\W, \left(L^2(C_\T)\right)^{s+1})$
be defined by
$$
Z[f](\w)(x):=(Z_\T[f](\w+\g_0^*)(x),\dots,Z_\T[f](\w+\g_s^*)(x)),$$
where $\w\in\W$ and $x\in C_\T$. 
Note that from Remark \ref{rem:Zak-matrix} and Proposition \ref{prop:Zak} where we identify $\widehat{\Gamma}$ with $\W$, we obtain that 

\begin{equation}\label{eq:zak-nueva}
Z[\Pi_\g f]=e_\g Z[f]\quad\textrm{ for all }\quad f\in L^2(\X), \g\in\Gamma.
\end{equation}
 Moreover, with the same notation of Remark \ref{rem:Zak-matrix} and noticing that $F$ is a unitary matrix we have 
\begin{align*}
\|Z[f]\|^2&=\int_\W \|Z[f](w)\|^2\,dm_{\wh\T}(\w)
=\int_\W \|F.D(\w,\cdot).V(\w,\cdot))\|^2\,dm_{\wh\T}(\w)\\
&=\int_\W \|D(\w,\cdot).V(\w,\cdot))\|^2\,dm_{\wh\T}(\w)
=\int_\W \sum_{j=0}^s\|\left[D(\w,\cdot).V(\w,\cdot)\right]_j\|_{L^2(C_\T)}^2\,dm_{\wh\T}(\w)\\
&=\int_\W \sum_{j=0}^s\int_{C_\T}|J_\sigma(-a_j,x)^{\frac1{2}}Z_\Gamma[f](\w)(\sigma_{-a_j}(x))|^2\,d\mu(x)\,dm_{\wh\T}(\w)\\
&=\int_\W \int_{C_\Gamma}|Z_\Gamma[f](\w)(y)|^2\,d\mu(y)\,dm_{\wh\T}(\w)=\|f\|^2_{L^2(\X)}.
\end{align*}
Here, in the last equality we have used \eqref{eq:tiling-set} and the definition of $J$.
This shows that $Z$ is an isometry and we can conclude that it is an isometric isomorphism (i.e. surjective) because so is $Z_\Gamma$. 

Furthermore,  by \eqref{eq:zak-nueva}, it follows that $V\subseteq L^2(\X)$ is a $(\sigma,\Gamma)$-invariant space if and only if $Z[V]$ is an MI-space in  $L^2(\W, \left(L^2(C_\T)\right)^{s+1})$ with respect to the determining set  $D=\{e_\g\}_{\g\in\Gamma}$.

Now we define a decomposition  of $\mathcal{H}= \left(L^2(C_\T)\right)^{s+1}$ in the following way: for each $\xi\in\mathcal{N}$ let
\[
\mathcal{H}_{\xi}:= \{F\in \left(L^2(C_\T)\right)^{s+1}\colon F_i=0 \mbox{ for all }i \mbox{ such that }\g_i^*\notin\xi+\Delta^* \}.
\]
Then we clearly have that  $\left(L^2(C_\T)\right)^{s+1}=\bigoplus_{\xi\in\mathcal{N}}\mathcal{H}_{\xi}$.
The characterization of $(\Gamma, \sigma)$-invariant spaces which are also $(\Delta, \sigma)$-invariant in terms of  decomposable MI-spaces is stated in the following theorem. 

\begin{theorem}\label{thm:decomposable}
Let $\Gamma\subseteq \Delta\subseteq \T$ be  discrete and countable LCA groups satisfying $(a)$ and $(b)$ of Section  \ref{sec:extra-invariance} and let $V\subseteq L^2(\X)$ be a $(\Gamma,\sigma)$-invariant space. Then, the following are equivalent:
\begin{enumerate}
\item [(i)] $V$ is $(\Delta,\sigma)$-invariant,
\item [(ii)] $Z[V]\subseteq L^2(\W, \left(L^2(C_\T)\right)^{s+1})$ is decomposable MI-space with respect to $\{\mathcal{H}_{\xi}\}_{\xi\in\mathcal{N}}.$
\end{enumerate}
\end{theorem}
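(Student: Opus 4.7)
The plan is to use Theorem \ref{thm:ppal} as a black box and translate its content through the isometric isomorphism $Z$ into the MI-space language. The key observation is that for each $\xi\in\calN$ the subspace $U_\xi$ corresponds, under $Z$, exactly to the projection $\calP_\xi(Z[V])=Z[V]_\xi$. Once this identification is established, the equivalence is immediate.

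First I would record that $Z[V]$ is an MI-subspace of $L^2(\W,(L^2(C_\T))^{s+1})$ with respect to the determining set $D=\{e_\gamma\}_{\gamma\in\Gamma}$; this is precisely relation \eqref{eq:zak-nueva} combined with the fact that $Z$ is an isometric isomorphism. Hence the membership ``$f\in V$'' is equivalent to ``$Z[f]\in Z[V]$,'' and $Z[V]_\xi$ makes sense in the sense of Definition \ref{def:decomposable-MI}.

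The heart of the argument is the claim
\begin{equation*}
Z[U_\xi]=\calP_\xi(Z[V])=Z[V]_\xi \quad\text{for every }\xi\in\calN.
\end{equation*}
To prove it, I would unpack both sides coordinate-by-coordinate. Fix $\xi\in\calN$ and $\w\in\Omega$; note that $\w+\gamma_i^*\in B_\xi=\Omega+\xi+\Delta^*$ if and only if $\gamma_i^*\in\xi+\Delta^*$, that is, precisely when the $i$-th coordinate is not killed by $\calP_\xi$. Thus for $g\in V$ the cut-off $Z_\T[f]=\mathbf{1}_{B_\xi}Z_\T[g]$ translates into
\begin{equation*}
Z[f](\w)_i=\mathbf{1}_{B_\xi}(\w+\gamma_i^*)\,Z_\T[g](\w+\gamma_i^*)=(\calP_\xi Z[g](\w))_i,
\end{equation*}
so $Z[f]=\calP_\xi(Z[g])$. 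This gives $Z[U_\xi]\subseteq\calP_\xi(Z[V])$. Conversely, given $g\in V$, the function $f\in L^2(\X)$ defined via $Z_\T[f]:=\mathbf{1}_{B_\xi}Z_\T[g]$ (which exists since $Z_\T$ is onto) lies in $U_\xi$ and satisfies $Z[f]=\calP_\xi(Z[g])$, completing the reverse inclusion.

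Having the identification, the proof concludes by invoking Theorem \ref{thm:ppal}: $V$ is $(\Delta,\sigma)$-invariant if and only if $U_\xi\subseteq V$ for every $\xi\in\calN$; applying the isometric isomorphism $Z$ this reads $Z[V]_\xi=Z[U_\xi]\subseteq Z[V]$ for every $\xi\in\calN$, which by Definition \ref{def:decomposable-MI} is precisely the decomposability of the MI-space $Z[V]$ with respect to $\{\mathcal{H}_\xi\}_{\xi\in\calN}$. I expect the only delicate point to be the careful bookkeeping in the coordinate identification above, where one must check that the indicator $\mathbf{1}_{B_\xi}$ restricted to translates of the section $\Omega$ selects exactly the cosets $\gamma_i^*\in\xi+\Delta^*$; once this is verified the rest reduces to applying previously proved results.
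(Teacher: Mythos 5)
Your proposal is correct and follows essentially the same route as the paper: both reduce the statement to Theorem \ref{thm:ppal} via the identification $Z[U_\xi]=\calP_\xi Z[V]$, which is verified by the same coordinatewise observation that $\w+\g_i^*\in B_\xi$ precisely when $\g_i^*\in\xi+\Delta^*$. The only (welcome) difference is that you make explicit the use of surjectivity of $Z_\T$ for the reverse inclusion, which the paper leaves implicit.
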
 

\begin{proof}
By Theorem \ref{thm:ppal} we know that $V$ is $(\Delta,\sigma)$-invariant if and only if 
$U_\xi\subseteq V$ for all $\xi\in\mathcal{N}$, where $U_\xi$ is as in \eqref{eq:subspace-U}. Since $U_\xi\subseteq V$ if and only if  $Z[U_\xi]\subseteq Z[V]$, it is enough to show that  $Z[U_\xi]=\calP_\xi Z[V]$ for all $\xi\in\mathcal{N}$ where $\calP_\xi(\Phi)(\w)=P_{\calH_\xi}(\Phi(\w))$ with $P_{\calH_\xi}:
L^2(\W, \left(L^2(C_\T)\right)^{s+1}) \longrightarrow L^2(\W, \mathcal{H}_{\xi})$ being the 
orthogonal projection onto $L^2(\W, \mathcal{H}_{\xi}).$

Fix $\xi\in\mathcal{N}$ and $g\in V$. For $f\in L^2(\X)$ such that  $Z_\T[f]={\bf 1 }_{B_\xi}Z_\T[g]$, we have that 
$$Z[f](\w)(x)=({\bf 1 }_{B_\xi}(\w+\g_0^*)Z_\T[g](\w+\g_0^*)(x),\dots,{\bf 1 }_{B_\xi}(\w+\g_s^*)Z_\T[g](\w+\g_s^*)(x))$$
for $m_{\widehat{\T}}$- a.e. $\w\in\W$ and $\mu$- a.e. $x\in C_\T$. Then, since for all $j\in\{0,\dots, s\}$,
$$\left[Z[f](\w)(x)\right]_j=
\begin{cases}
0&\textrm{ if }j\textrm{ is such that  }\g_j^*\notin\xi+\Delta^* \\
Z_\T[g](\w+\g_j^*)(x)&\textrm{ otherwise},
\end{cases}
$$ 
it follows that $Z[f]=\calP_\xi Z[g]$.  Therefore,
$\calP_\xi Z[V]=Z[U_\xi]$, and the result follows.
\end{proof}

\subsection{Approximation problem}

In \cite{ACHM07} it was solved the problem of finding a shift-invariant space (i.e. a closed subspace of $L^2(\R^d)$ invariant under integer translations)  that best fits a given set of data in the sense of least squares. Later, in \cite{CM15} the same problem was solved but for shift-invariant spaces having some extra invariance. The question that arises here is if one can state and solve this approximation problem in the context of $(\Gamma, \sigma)$-invariant spaces. For this, given a finitely generated  $(\Gamma, \sigma)$-invariant space $V$, define the {\it length of $V$}, $\ell(V)$,  as the minimum number of function we need to generate it  and consider the classes 
$$\calC_\ell:=\{V\subseteq L^2(\X): V \textrm{ is a finitely generated } (\Gamma, \sigma)\textrm{-invariant space with }\ell(V)\leq \ell\},$$  
with $\ell$ being a fixed number in $\N$ and 
$$\calC^\Delta_\ell=\{V\in\calC_\ell: V \textrm{ is also }(\Delta, \sigma)\textrm{-invariant}\}.$$
Then, the approximation problems can be stated as follows: 
\begin{itemize}
\item [(1)]Given $\Psi=\{\psi_1,\dots,\psi_m\}\subseteq L^2(\W,L^2(C_{\Gamma}))$, find $V^*\in\calC_\ell$ such that 
$$\sum_{j=1}^m \|\psi_j- P_{V^*} \psi_j\|^2 \le \sum_{j=1}^m \|\psi_j- P_{V} \psi_j\|^2,$$
for all $V\in\calC_\ell$, where $P_V$ and $P_{V^*}$ denotes the orthogonal projections onto $V$ and $V^*$ respectively.  
\item [(2)]Given $\Psi=\{\psi_1,\dots,\psi_m\}\subseteq L^2(\W, \left(L^2(C_\T)\right)^{s+1})$, find $W^*\in\calC^\Delta_\ell$ such that 
$$\sum_{j=1}^m \|\psi_j- P_{W^*} \psi_j\|^2 \le \sum_{j=1}^m \|\psi_j- P_{W} \psi_j\|^2,$$
for all $W\in\calC^\Delta_\ell$.
\end{itemize}

The solutions for problems (1) and (2) are obtained as combinations of severals results: on one hand, we have that 
  $(\Gamma, \sigma)$-invariant spaces are in one-to-one correspondence with  MI-spaces and $(\Gamma, \sigma)$-invariant spaces with extra invariance in $\Delta$ are characterized in terms of decomposable MI-spaces in Theorem \ref{thm:decomposable}. On the other hand, Problems (1) and (2) have their version for MI-spaces and were solved in \cite[Theorems 4.2 and 4.6]{CMP17}.
Then, as a consequence we have:
\begin{theorem}
Let $\ell\in \N$ and $\Psi=\{\psi_1,\dots,\psi_m\}\subseteq L^2(\W,L^2(C_{\Gamma}))$. Then, there exists a $(\Gamma, \sigma)$-invariant space $W^*$ of length $\ell$ and having extra invariance in $\Delta$  solving Problem (2), that is, satisfying
\begin{equation*}
\sum_{j=1}^m \|\psi_j- P_{V^*} \psi_j\|^2 \le \sum_{j=1}^m \|\psi_j- P_{V} \psi_j\|^2,
\end{equation*}
for all $V\in\calC_\ell$. 
\end{theorem}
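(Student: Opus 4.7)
The strategy is to pull the problem entirely over to the multiplicatively invariant side via an isometric isomorphism, then invoke the existing best‐approximation theorem for decomposable MI-spaces in \cite{CMP17}.

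First I would translate the data and the classes of competitors. The Zak transform $Z_\Gamma$ is an isometric isomorphism between $L^2(\X)$ and $L^2(\W,L^2(C_\Gamma))$ that intertwines the action of $\Gamma$ with multiplication by the characters $e_\g$, so it induces a bijection between $(\Gamma,\sigma)$-invariant subspaces of $L^2(\X)$ and MI-subspaces of $L^2(\W,L^2(C_\Gamma))$ with respect to the determining set $D=\{e_\g\}_{\g\in\Gamma}$. Under this bijection the length $\ell(V)$ corresponds to the MI-length (minimum number of MI generators) of $Z_\Gamma[V]$, because generators map to generators. Hence the data $\Psi\subseteq L^2(\W,L^2(C_\Gamma))$ may be regarded as the Zak transform of a set in $L^2(\X)$, and competitors in $\calC_\ell$ correspond to MI-spaces in $L^2(\W,L^2(C_\Gamma))$ of MI-length at most $\ell$.

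Next I would bring the $(\Delta,\sigma)$-invariance into play. To use Theorem \ref{thm:decomposable}, I would move from $L^2(\W,L^2(C_\Gamma))$ to $L^2(\W,(L^2(C_\T))^{s+1})$ via the unitary equivalence furnished by the matrix identity $F\cdot D(\w,x)\cdot V(\w,x)=U(\w,x)$ of Remark \ref{rem:Zak-matrix}: pointwise in $\w$, the $(s+1)\times(s+1)$ matrix $F\cdot D(\w,\cdot)$ is unitary, so it implements an isometric isomorphism between the two $L^2$ spaces that intertwines the two Zak pictures and preserves MI-length with respect to $D=\{e_\g\}_{\g\in\Gamma}$. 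Composing with $Z_\Gamma$ yields exactly the isometry $Z$ from $L^2(\X)$ onto $L^2(\W,(L^2(C_\T))^{s+1})$ introduced in Section \ref{sec:decomposable}, and by Theorem \ref{thm:decomposable}, $V\in\calC^\Delta_\ell$ if and only if $Z[V]$ is a decomposable MI-space with respect to $\{\calH_\xi\}_{\xi\in\calN}$ of length at most $\ell$. After transporting $\Psi$ through this unitary, the sums of squared projection errors are preserved (orthogonal projections and norms commute with isometric isomorphisms).

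Finally, I would apply the existing MI-space result. Specifically, \cite[Theorem 4.6]{CMP17} solves the least‐squares approximation problem by decomposable MI-spaces of bounded length in $L^2(\W,\calH)$ for any decomposition $\calH=\bigoplus_\xi\calH_\xi$; applied to our setting with $\calH=(L^2(C_\T))^{s+1}=\bigoplus_{\xi\in\calN}\calH_\xi$, it produces a decomposable MI-space $M^*$ of length $\le\ell$ minimizing $\sum_{j}\|Z[\psi_j]-P_{M^*}Z[\psi_j]\|^2$ over all such $M$. Pulling $M^*$ back through $Z^{-1}$ yields a $(\Gamma,\sigma)$-invariant space $W^*\in\calC^\Delta_\ell$ which, by the isometry, attains the minimum of $\sum_j\|\psi_j-P_V\psi_j\|^2$ over $V\in\calC^\Delta_\ell$, as desired.

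The main technical point, and the only place where something could go wrong, is verifying that the unitary identification $F\cdot D$ preserves the MI structure and the notion of length \emph{jointly with} the decomposition $\bigoplus_\xi\calH_\xi$ used in Theorem \ref{thm:decomposable}: the blocks $\calH_\xi$ are defined in terms of the coordinates of $Z$, not those of $Z_\Gamma$, so it is essential to carry out the translation via $Z$ rather than $Z_\Gamma$ before invoking \cite[Theorem 4.6]{CMP17}. Once this compatibility is in place, the proof is a direct chain of bijections.
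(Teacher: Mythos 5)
Your proposal is correct and follows essentially the same route the paper takes: the authors likewise reduce the problem through the isometric isomorphism $Z$ and the characterization of extra invariance via decomposable MI-spaces (Theorem \ref{thm:decomposable}), then invoke the approximation results for MI-spaces of \cite{CMP17}, leaving the details as an adaptation of \cite[Theorems 5.1 and 5.5]{CMP17}. Your remark that the decomposition $\{\calH_\xi\}_{\xi\in\calN}$ is tied to the coordinates of $Z$ rather than of $Z_\Gamma$ is precisely the compatibility the paper uses implicitly.
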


\begin{theorem}
Let $\ell\in \N$ and $\Psi=\{\psi_1,\dots,\psi_m\}\subseteq L^2(\W,\left(L^2(C_\T)\right)^{s+1})$. Then, there exists a $(\Gamma, \sigma)$-invariant space $W^*$ of length $\ell$ solving Problem (2), that is, satisfying
\begin{equation*}
\sum_{j=1}^m \|\psi_j- P_{W^*} \psi_j\|^2 \le \sum_{j=1}^m \|\psi_j- P_{W} \psi_j\|^2,
\end{equation*}
for all $W\in\calC^\Delta_\ell$. 
\end{theorem}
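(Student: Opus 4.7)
The plan is to reduce the problem to the corresponding approximation problem for decomposable MI-spaces, which has already been solved in \cite[Theorem~4.6]{CMP17}. The reduction rests on two ingredients: the isometric isomorphism $Z\colon L^2(\X)\to L^2(\W,(L^2(C_\T))^{s+1})$ constructed above, and the identification of $(\Gamma,\sigma)$-invariant spaces with extra invariance in $\Delta$ with decomposable MI-spaces provided by Theorem~\ref{thm:decomposable}.

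First I would show that $W\mapsto Z[W]$ sets up a length-preserving bijection between the class $\calC_\ell^\Delta$ and the class $\mathfrak{D}_\ell$ of decomposable MI-subspaces of $L^2(\W,(L^2(C_\T))^{s+1})$ (with respect to the determining set $\{e_\gamma\}_{\gamma\in\Gamma}$ and the Hilbert space decomposition $\{\calH_\xi\}_{\xi\in\calN}$) of length at most $\ell$. That the correspondence is well-defined and preserves the MI-space structure was established in the discussion preceding Theorem~\ref{thm:decomposable}, via the intertwining identity \eqref{eq:zak-nueva}. That it preserves the number of generators follows from the same identity: a set $\{\varphi_1,\dots,\varphi_\ell\}$ generates $W$ as a $(\Gamma,\sigma)$-invariant space if and only if $\{Z[\varphi_1],\dots,Z[\varphi_\ell]\}$ generates $Z[W]$ as an MI-space, since the $\Gamma$-action is intertwined with multiplication by the symbols $e_\gamma$ of the determining set. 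Finally, Theorem~\ref{thm:decomposable} identifies the $\calC_\ell^\Delta$-elements inside this class as precisely the decomposable members.

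Second, since $Z$ is an isometry, orthogonal projections satisfy $P_{Z[W]}=Z\circ P_W\circ Z^{-1}$, so, under the natural identification $\psi_j\leftrightarrow Z^{-1}\psi_j$ by which $P_W\psi_j$ must be interpreted (the data $\psi_j$ already living in the Zak domain), one has
$$\sum_{j=1}^m\|\psi_j-P_W\psi_j\|^2=\sum_{j=1}^m\|\psi_j-P_{Z[W]}\psi_j\|^2$$
for every $W\in\calC_\ell^\Delta$. Minimizing the right-hand side over $\mathfrak{D}_\ell$ is exactly the content of \cite[Theorem~4.6]{CMP17}, which produces an optimal decomposable MI-space $M^*$ of length at most $\ell$ with respect to $\{\calH_\xi\}_{\xi\in\calN}$. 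Setting $W^*:=Z^{-1}[M^*]\in\calC_\ell^\Delta$ then solves Problem~(2).

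I expect the only mildly delicate step to be the length-preservation claim, where one must combine the density of the $\Gamma$-translates of $\{\varphi_1,\dots,\varphi_\ell\}$ in $W$ with the continuity of $Z$ to conclude that the $e_\gamma$-multiples of $\{Z[\varphi_1],\dots,Z[\varphi_\ell]\}$ are dense in $Z[W]$, and conversely. Beyond this, the argument is a transparent pushforward, via the isometry $Z$, of the decomposable MI-space approximation theorem of \cite{CMP17}, and the preceding approximation theorem for the class $\calC_\ell$ without the extra-invariance requirement can be established in exactly the same way using \cite[Theorem~4.2]{CMP17} instead.
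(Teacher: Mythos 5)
Your proposal is correct and follows essentially the same route the paper intends: the paper explicitly states that the proof is an adaptation of \cite[Theorems 5.1 and 5.5]{CMP17}, i.e., pushing the problem forward through the isometry $Z$, identifying $\calC_\ell^\Delta$ with decomposable MI-spaces via Theorem \ref{thm:decomposable}, and invoking \cite[Theorem 4.6]{CMP17}. Your attention to length preservation and to the interpretation of $P_W\psi_j$ for data already living in the Zak domain fills in exactly the details the paper leaves to the reader.
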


The proofs of the above theorems are easy  adaptations of that of \cite[Theorem 5.1 and 5.5]{CMP17} so, we left the details for the reader. 

\end{document}